\newtheorem{thm}{Theorem}[section]
\newtheorem{cor}[thm]{Corollary}
\newtheorem{lema}[thm]{Lemma}
\newtheorem{prop}[thm]{Proposition}
\theoremstyle{definition}
\theoremstyle{remark}
\newtheorem{rem}[thm]{Remark}
\numberwithin{equation}{section}
\newcommand{\R}{\mathbb R}
\newcommand{\X}{\mathcal{X}}
\newcommand{\ve}{\varepsilon}
\newcommand{\lam}{\lambda}
\newcommand{\LL}{\mathcal{L}}
\newcommand{\Lam}{\Lambda}
\def\ep{\varepsilon}
\begin{document}

\title[Nonlocal diffusion
with absorption]{Large time  behavior for a nonlocal diffusion equation with
absorption and bounded initial data: the subcritical case}

\author[A. Salort \and J. Terra \and N. Wolanski]{Ariel Salort \and Joana Terra \and Noemi Wolanski}


\address{Ariel Salort\hfill\break\indent
IMAS-CONICET and   \hfill\break\indent Departamento  de Matem{\'a}tica, FCEyN-UBA \hfill\break\indent(1428)
Buenos Aires, Argentina.}\email{{\tt asalort@dm.uba.ar} }
\address{Joana Terra\hfill\break\indent
IMAS-CONICET and   \hfill\break\indent Departamento  de Matem{\'a}tica, FCEyN-UBA \hfill\break\indent(1428)
Buenos Aires, Argentina.} \email{{\tt jterra@dm.uba.ar} }
\address{Noemi Wolanski \hfill\break\indent
IMAS-CONICET and   \hfill\break\indent Departamento  de Matem{\'a}tica, FCEyN-UBA \hfill\break\indent(1428)
Buenos Aires, Argentina.} \email{{\tt wolanski@dm.uba.ar} }

\thanks {Supported by
CONICET  PIP625 Res. 960/12, ANPCyT PICT-2012-0153
 and UBACYT X117.}
\keywords{Nonlocal diffusion, Large time behavior.}
\subjclass[2010]{%
35R09, 
45K05, 
35K57 
35B40 
}

\date{}

\begin{abstract}
In this paper we continue our study of the large time behavior of the bounded solution to the nonlocal diffusion equation with absorption
\begin{equation*}
\begin{cases}
u_t =  \LL u-u^p\quad& \mbox{in}\quad \R^N\times(0,\infty),\\
u(x,0) =  u_0(x)\quad& \mbox{in}\quad \R^N,
\end{cases}
\end{equation*}
where $p>1$, $u_0\ge0$ and bounded and
\begin{equation*}
\LL u(x,t)=\int J(x-y)\left(u(y,t)-u(x,t)\right)\,dy
\end{equation*}
with $J\in C_0^{\infty}(\R^N)$, radially symmetric,  $J\geq 0$ with $\int J=1$.

Our assumption on the initial datum is that $0\le u_0\in L^\infty(\R^N)$ and
\begin{equation*}
|x|^{\alpha}u_0(x)\to A>0\quad\mbox{as}\quad|x|\to\infty
\end{equation*}

This problem was studied in \cite{TW1,TW2} in the supercritical and critical cases $p\ge 1+2/\alpha$.

In the present paper we study the subcritical case $1<p<1+2/\alpha$. More generally, we consider bounded non-negative initial data such that
\[ |x|^{\frac2{p-1}}u_0(x)\to\infty\quad\mbox{as}\quad |x|\to \infty
\]
and prove that
\[t^{\frac1{p-1}} u(x,t)\to\Big(\frac1{p-1}\Big)^{\frac1{p-1}}\quad\mbox{as}\quad t\to\infty
\]
uniformly in $ |x|\le k\sqrt t$, for every $k>0$.

Of independent interest is our study of the positive eigenfunction of the operator $\LL$ in the ball $B_R$ in the $L^\infty$ setting that we include in Section 3.
\end{abstract}

\maketitle

\date{}

\section{Introduction}
In this paper we continue our study of the large time behavior of the solution to the nonlocal diffusion equation with absorption
\begin{equation}\label{problem}
\begin{cases}
u_t =  \LL u-u^p\quad& \mbox{in}\quad \R^N\times(0,\infty),\\
u(x,0) =  u_0(x)\quad& \mbox{in}\quad \R^N,
\end{cases}
\end{equation}
where $p>1$, $u_0\ge0$ and bounded and
\begin{equation}\label{nonlocal-operator}
\LL u(x,t)=\int J(x-y)\left(u(y,t)-u(x,t)\right)\,dy
\end{equation}
with $J\in C_0^{\infty}(\R^N)$, radially symmetric,  $J\geq 0$ with $\int J=1$.

Our assumption on the initial datum is that $0\le u_0\in L^\infty(\R^N)$ and
\begin{equation}\label{assumption-alpha}
|x|^{\alpha}u_0(x)\to A>0\quad\mbox{as}\quad|x|\to\infty
\end{equation}

These kind of nonlocal diffusions appear in several applications such as population dynamics, disease propagation, image enhancement,  etc (see, for instance, \cite{BCh1,BCh2,BFRW,BZ,CF,F,GO,Z}.

When the kernel $J$ in the nonlocal operator \eqref{nonlocal-operator} satisfies the hypotheses in this paper, the long time behavior of the solutions is closely related to that of the corresponding problem for the heat operator with a diffusivity related to the kernel $J$ (see, for instance, \cite{ChChR, GMQ,IR1,  PR,  TW1, TW2}).

In \cite{PR} the authors started the study of \eqref{problem} when $u_0\in L^1(\R^N)$, in the supercritical case $p>1+2/N$. Then, in \cite{TW1,TW2} we studied this problem under assumption \eqref{assumption-alpha}.

The main question we address is  what is the interplay between the parameters $p$, $\alpha$ and the dimension $N$ in the large time behavior of the solution.

In \cite{TW1,TW2} the critical and supercritical cases were studied. This is, we assumed that, either $u_0\in L^1(\R^N)$ and $p\ge 1+2/N$ (completing the results of \cite{PR} by considering the critical case), or $0<\alpha<N$ and $p\ge 1+2/\alpha$.
Also some intermediate asymptotics for $u_0$ involving logarithms where considered in \cite{TW2}, always in the supercritical case.

In the present paper we complete our study by considering the subcritical case $1<p<1+2/\alpha$ that was left open in the previous articles.

The critical value $p_c=1+2/\alpha$ is the one that makes diffusion and absorption of the ``same size''. It is interesting to observe that this critical value depends on the size of the initial condition at infinity.

In the supercritical case, diffusion wins and the reaction component disappears in the long run. In the critical case, both diffusion and reaction remain in the time asymptotics (see \cite{PR,TW1,TW2}).

In the present paper we show that, in the subcritical case, only reaction remains in the large time behavior and the solution behaves as that of the  equation
\[
u_t=-u^p,\quad u(1)=\Big(\frac1{p-1}\Big)^{\frac1{p-1}}.
\]

This is
\begin{equation}\label{main-result}
t^{\frac1{p-1}}u(x,t)\to \Big(\frac1{p-1}\Big)^{\frac1{p-1}}\quad\mbox{as}\quad t\to \infty\quad\mbox{uniformly in }\{|x|\le k\sqrt t\}
\end{equation}
for every $k>0$.

It is interesting to observe that the final profile is independent of the initial datum $u_0$ as long as it is bounded and satisfies \eqref{assumption-alpha}.
In the critical and supercritical cases, both the constant $A$ and the exponent $\alpha$ in \eqref{assumption-alpha} intervene in the time asymptotics.

Our result is similar to the one obtained by Gmira and Veron in \cite{GMVE} for the heat equation with absorption. As in \cite{GMVE}, we get this behavior for any nonnegative and bounded initial datum $u_0$ such that
\begin{equation}\label{assumption-infty}
|x|^{\frac2{p-1}}u_0(x)\to\infty\quad\mbox{as}\quad|x|\to\infty
\end{equation}
thus allowing a more general behavior of $u_0$ at infinity  than the one in \eqref{assumption-alpha}.

In this paper we follow the ideas of \cite{GMVE} where the authors constructed  subsolutions of separated variables with the right asymptotic behavior. These subsolutions involve the positive eigenfunctions $h_R$ of the laplacian in the balls $B_R$,
normalized so that the $\|h_R\|_{L^\infty(B_R)}=1$.

The authors make use of the scaling invariance of the laplacian so that $h_R(x)=h_1(x/R)$ and the principal eigenvalue $\lambda_R=R^{-2}\lambda_1$.

\medskip

One of the main differences when dealing with problem \eqref{problem} is the lack of any scaling invariance of the problem. Nevertheless, a parabolic scaling leads --in the limit of the scaling parameter going to infinity-- to the heat equation with diffusivity $A(J)=\frac1{2N}\int J(z)|z|^2\,dz$. And this fact explains, in a way, the interplay between the time asymptotics of the nonlocal diffusion equation and that of the heat equation with diffusivity $A(J)$, as was made clear in \cite{TW2}.

This scaling property was also the basis for the understanding of the behavior as $|x|\to\infty$ of the solution to
\[
\begin{cases}
\LL\phi=0\quad&\mbox{in}\quad\R^N\setminus\Omega\\
\phi=1\quad&\mbox{in}\quad\Omega\\
\phi(x)\to0\quad&\mbox{as}\quad|x|\to\infty.
\end{cases}
\]
with $\Omega$ an open bounded set, studied in \cite{CEQW}.

\medskip

One of the main contributions of the present paper is a thorough study of the positive eigenfunction $H_R$ to the nonlocal operator $\LL$ in the ball $B_R$ with Dirichlet boundary conditions $H_R=0$ in $\R^N\setminus B_R$, normalized so that $\|H_R\|_{L^\infty}=1$.

This study was initiated in \cite{GMRO} where the existence of a principal eigenvalue $\Lam_R$ associated to a positive eigenfunction was proved. Moreover, in \cite{GMRO} the authors proved that, asymptotically the principal eigenvalue behaves as that of the laplacian with diffusivity $A(J)$. This is,
\[
R^2\Lam_R\to A(J)\lam_1\quad\mbox{as}\quad R\to\infty.
\]

In \cite{GMRO} the authors also studied the associated eigenfunction in the $L^2$ setting and they proved that, after rescaling to the unit ball with an $L^2$ normalization, one gets convergence in $L^2$ to the positive eigenfunction of the laplacian in the unit ball with Dirichlet boundary conditions and unit $L^2-$~norm.

\medskip

In the present paper, due to the application  to the study of the asymptotics of \eqref{problem} we have in mind, we are interested in a different normalization and convergence. Namely, we normalize so that the $L^\infty-$~norm is preserved and prove uniform convergence in the unit ball.

In order to get this kind of compactness, the arguments in \cite{GMRO} cannot be applied. Instead, we get uniform bounds for the derivatives of the rescaled eigenfunctions $\widetilde H_R(x)=H_R(Rx)$, on smaller balls $B_r$ with $0<r<1$, by using an integral representation formula for $H_R$ and a precise decay in terms of $R$ of $H_R$ in a neighborhood of the boundary of $B_R$. To this end, we construct an upper barrier. This barrier also allows to get uniform smallness of the rescaled eigenfunctions and their limits in a neighborhood of  $\partial B_1$ that gives, in particular, uniform convergence in the whole ball. The uniform limit is then identified as being $h_1$, the positive eigenfunction of the laplacian in the unit ball with Dirichlet boundary conditions and unit $L^\infty-$~norm.

We believe that the results concerning the eigenfunctions $H_R$ are of independent interest.

\medskip

The paper is organized as follows. In Section 2 we state the results of \cite{GMRO} on the principal eigenvalue of the operator $\LL$ with Dirichlet boundary conditions set in the ball $B_R$. Then, in Section 3 we perform our study of the eigenfunctions associated to the principal eigenvalues in the $L^\infty$ setting. In Section 4 we construct a subsolution to \eqref{problem} by following the ideas of \cite{GMVE} for the heat equation. Due to the lack of any regularizing effect of the nonlocal operator, we need to prove that $\inf_{B_R}u(\cdot,t)>0$ for every $R>0$, $t>0$ (Lemma \ref{lema-inf-loc}). Finally, in Section 5 we prove our main result, namely that \eqref{main-result} is satisfied.

\section{Definitions and Preliminary Results}

In this section we discuss notation and basic definitions. Moreover we state some previous results on the first eigenvalue of the nonlocal problem with Dirichlet boundary conditions in a ball.

Let $R>0$ and define the ball of radius $R$ as
$$B_R=\{x\in\R^n:|x|<R\}$$

We denote by $\lambda_R$ the first eigenvalue of the laplacian in $B_R$. That is, $\lambda_R$ verifies that there is a solution to the following problem,
\begin{equation}\label{Lapleigen}
\begin{cases}
-\Delta u = \lambda_R u \quad&\text{ in } B_R,\\
\ \ u  = 0 \quad&\text{ on } \partial B_R,\\
\ \ u  >0 \quad&\text{ in } B_R.
\end{cases}
\end{equation}

We know that $\lambda_R$ is simple. Let us call $h_R$ the associated eigenfunction  satisfying
\begin{equation} \label{norm1}
0< h_R(x)\leq 1=\max_{x\in B_R}h_R(x)\qquad\mbox{in }B_R.
\end{equation}

It is well known that, due to the scaling of the laplacian there holds that, $\lambda_R=R^{-2}{\lambda_1}$.

\medskip

We now consider the nonlocal eigenvalue problem,
\begin{equation}\label{NLeigen}
\begin{cases}
-\LL u(x)  =\Lambda_R u(x)\quad &\text{ in } B_R\\
\ \ u  = 0 \quad&\text{ in } \R^N\setminus B_R,\\
\ \ u>0\quad&\text{ in } B_R.
\end{cases}
\end{equation}
where $\mathcal{L}u(x)= \int J(x-y)(u(y,t)-u(x,t))\,dy$ and  $J\in C_0^{\infty}(\R^N)$ is radially symmetric and $J\geq 0$ with $\int J=1$.

It was proved in \cite{GMRO} that such an eigenvalue exists, it is simple and moreover,
\begin{equation} \label{comport}
  \Lam_R \sim A(J) \frac{\lam_1}{R^2} \quad \textrm{as } R\to +\infty
\end{equation}
with
\begin{equation}\label{ccte}
A(J) =\frac{1}{2N}\int_{\R^N} J(z) |z|^2 \; dz.
\end{equation}
This is,
$$\Lam_R=A(J)(1+o(1))\frac{\lam_1}{R^2}  \quad \textrm{as } R\to +\infty.$$

Consequently, the first eigenvalue $\Lambda_R$ for the nonlocal problem \eqref{NLeigen} behaves asymptotically as the first eigenvalue $\lambda_R$ of the laplacian \eqref{Lapleigen}, as $R$ tends to infinity.

Moreover, in \cite{GMRO} the authors proved that $\Lambda_R$ is given variationally as
$$\Lambda_R= \inf_{\stackrel{0\neq u\in L^2(B_R)}{u=0\mbox{ in }B_R^c}}\frac{1}{2}\frac{\int\int J(x-y)(u(x)-u(y))^2\, dx\, dy}{\int u^2(x)\, dx}.$$

\section{Some results on the eigenfunctions}\label{sect-HR}
In this section we study the eigenfunctions of the nonlocal problem in the ball $B_R$.

The eigenfunction problem was studied in \cite{GMRO} in the $L^2$ setting. This is, in \cite{GMRO} the authors consider the family of eigenfunctions normalized as to have the $L^2(B_R)-$norm equal to 1 and prove that, when properly rescaled, they converge to the unique positive eigenfunction of the laplacian in the unit ball with $L^2(B_1)-$norm equal to 1.

In the present paper we are interested in the family $H_R$ of positive eigenfunctions normalized so that the $L^\infty(B_R)-$norm is 1. We prove that, when properly rescaled, they converge to the unique positive eigenfunction of the laplacian in the unit ball with the same normalization. The convergence is uniform in the unit ball.

In order to get our result, we cannot use the compactness argument of \cite{GMRO} that holds only in $L^p$ for $p<\infty$. Instead, we use Arzel\`a-Ascoli. To this end, we get
uniform estimates of the derivatives of the rescaled eigenfunctions on compact subsets of the unit ball. The argument is delicate and uses a precise decay, in terms of $R$, of $H_R$ in a neighborhood of the boundary of $B_R$. This decay is obtained by comparison with a supersolution that we construct to this end. In this way we obtain uniform convergence on compact subsets of the unit ball.

The supersolution also allows us to prove that the rescaled eigenfunctions $\widetilde H_R$ are smaller than any positive constant in a neighborhood of $\partial B_1$ if $R$ is large.
 This, in turn, gives that the convergence is uniform in the unit ball to a function that is continuous in the closure and vanishes on the boundary. This limit function is therefore $h_1$.

In this way, we get our main result in this section. Namely,
\begin{thm}\label{teo-HR} Let $\LL$ the operator in \eqref{nonlocal-operator}.
Let $\Lambda_R\in \R$, $H_R\in C(\overline B_R)$~, $H_R=0$ in $\R^N\setminus B_R$,  be the unique solution to
\[
\begin{cases}
-\LL H(x)=\Lambda_R H(x)\quad&\mbox{in}\quad B_R,\\
\ \ H(x)=0\quad&\mbox{in}\quad \R^N\setminus B_R,\\
\ \ H(x)>0\quad&\mbox{in}\quad B_R
\end{cases}
\]
obtained in \cite{GMRO}, with the normalization $0<H(x)\le 1=\|H_R\|_{L^\infty(B_R)}$ for $x\in B_R$.

Let $\widetilde H_R(x)=H_R(Rx)$ for $x\in B_1$ and $h_1\in C^\infty(B_1)\cap C(\overline B_1)$ the positive eigenfunction of the laplacian in the unit ball such that $\|h_1\|_{L^\infty(B_1)}=1$. Then,
\[
\widetilde H_R\to h_1\quad(R\to\infty)\quad\mbox{uniformly in }B_1.
\]
\end{thm}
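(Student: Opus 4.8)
The plan is to realize $\widetilde H_R$ as a solution of a rescaled eigenvalue problem and extract a convergent subsequence via Arzelà–Ascoli, then identify the limit. First I would change variables: setting $\widetilde H_R(x) = H_R(Rx)$, one computes that $\LL H_R(Rx) = \int J(Rx - y)(H_R(y) - H_R(Rx))\,dy = R^N\int J(R(x-z))(\widetilde H_R(z) - \widetilde H_R(x))\,dz =: \LL_R \widetilde H_R(x)$, so that $-\LL_R \widetilde H_R = \Lambda_R \widetilde H_R$ in $B_1$, $\widetilde H_R = 0$ outside $B_1$, with $\|\widetilde H_R\|_{L^\infty(B_1)} = 1$. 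By \eqref{comport} we have $R^2\Lambda_R \to A(J)\lambda_1$. The rescaled kernel $J_R(z) := R^N J(Rz)$ concentrates at the origin with $\int J_R = 1$ and second moment $\int J_R(z)|z|^2\,dz = R^{-2}\int J(w)|w|^2\,dw = 2N A(J) R^{-2}$, so $R^2\LL_R$ behaves, heuristically, like $A(J)\Delta$ as $R\to\infty$; this is the mechanism that will force the limit to satisfy $-A(J)\Delta h = A(J)\lambda_1 h$, i.e. $-\Delta h = \lambda_1 h$.

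Next I would establish the compactness. This requires two ingredients that the introduction flags as the technical heart of the section. The first is \emph{a barrier near $\partial B_R$}: construct an explicit supersolution of $-\LL w = \Lambda_R w$ in an annular neighborhood $\{R - \rho < |x| < R\}$ (for a fixed $\rho$ depending on the support of $J$) which dominates $H_R$ there and decays like a controlled power of $1/R$; by comparison this yields $H_R(x) \le C R^{-\gamma}$, equivalently $\widetilde H_R(x)$ is uniformly small for $|x|$ near $1$. The second is \emph{interior derivative bounds}: using an integral representation of $H_R$ — roughly, from $-\LL H_R = \Lambda_R H_R$ one gets $(1+\Lambda_R)H_R(x) = \int J(x-y)H_R(y)\,dy$, and since $J \in C_0^\infty$ one can differentiate under the integral sign, with the boundary-layer decay estimate controlling the contribution of $y$ near $\partial B_R$ — to bound $\|D^k \widetilde H_R\|_{L^\infty(B_r)}$ uniformly in $R$ for each fixed $0 < r < 1$. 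With these bounds, Arzelà–Ascoli (diagonalizing over $r \uparrow 1$) gives a subsequence $\widetilde H_{R_j} \to h$ locally uniformly in $B_1$, and the uniform smallness near $\partial B_1$ upgrades this to uniform convergence on all of $\overline B_1$ to a function $h \in C(\overline B_1)$ with $h = 0$ on $\partial B_1$ and $\|h\|_{L^\infty(\overline B_1)} = 1$.

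It remains to identify $h$. I would pass to the limit in the equation against a test function: for $\varphi \in C_c^\infty(B_1)$, write $\int \varphi(x)\, R_j^2\LL_{R_j}\widetilde H_{R_j}(x)\,dx = \int \widetilde H_{R_j}(x)\, R_j^2\LL_{R_j}\varphi(x)\,dx$ by symmetry of the kernel, and use that $R_j^2 \LL_{R_j}\varphi \to A(J)\Delta\varphi$ uniformly (a standard Taylor-expansion computation using the second moment of $J$) together with $\widetilde H_{R_j} \to h$; on the other side $R_j^2\Lambda_{R_j}\widetilde H_{R_j} \to A(J)\lambda_1 h$. Hence $h$ is a distributional, thus classical, solution of $-\Delta h = \lambda_1 h$ in $B_1$, $h = 0$ on $\partial B_1$, $h \ge 0$, $h \not\equiv 0$; by simplicity of the principal eigenvalue of the laplacian and the normalization $\|h\|_{L^\infty} = 1$, we get $h = h_1$. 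Since the limit is the same for every subsequence, the whole family converges: $\widetilde H_R \to h_1$ uniformly in $B_1$.

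The main obstacle I expect is the construction of the supersolution/barrier giving the sharp $R$-dependence of $H_R$ in the boundary layer, and the consequent uniform interior derivative estimates — the nonlocal operator has no regularizing effect, so all smoothness of $\widetilde H_R$ must be imported from the smoothness of $J$ via the representation $(1+\Lambda_R)H_R = J * H_R$, and differentiating this relation is only useful once one knows $H_R$ is genuinely small (not merely $O(1)$) near $\partial B_R$, which is exactly what the barrier supplies. Everything else — the rescaling bookkeeping, the weak passage to the limit, and the identification of $h_1$ — is routine once the compactness is in hand.
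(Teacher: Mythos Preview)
Your overall architecture --- Arzel\`a--Ascoli on compact subsets of $B_1$, a barrier near the boundary to force uniform smallness of $\widetilde H_R$ near $\partial B_1$, and weak passage to the limit to identify $h_1$ --- matches the paper exactly. The weak-limit argument and the uniqueness step are fine.

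The gap is in your interior derivative estimate. From the eigenvalue equation one has $(1-\Lambda_R)H_R = J*H_R$ (not $1+\Lambda_R$, but that is minor). Differentiating gives
\[
(1-\Lambda_R)\,\nabla H_R(x)=\int \nabla J(x-y)\,H_R(y)\,dy,
\]
hence $|\nabla H_R|\le C\|\nabla J\|_{L^1}$, i.e.\ a bound of order one. But $\nabla \widetilde H_R(x)=R\,\nabla H_R(Rx)$, so this only yields $|\nabla \widetilde H_R|\le CR$, which blows up. The boundary-layer decay does \emph{not} rescue this: since $J$ has compact support, for $|x|\le rR$ with $r<1$ and $R$ large the integral sees only $y$ with $|y|\le rR+1<R$, well away from $\partial B_R$, so there is no small contribution to exploit. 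Iterating the identity a bounded number of times gains smoothness but still no factor of $R^{-1}$.

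The paper obtains the missing $R^{-1}$ by a different integral representation. One views $H_R$ as the stationary solution of the evolution problem $w_t-\LL w=\Lambda_R w-\chi_{B_R^c}(J*w)$ in $\R^N$ and writes it via the fundamental solution $F(x,t)=e^{-t}\delta(x)+\omega(x,t)$ of $\partial_t-\LL$. Choosing the free parameter $t=R^2$ and using the estimates $\int_{\R^N}|\nabla\omega(\cdot,t)|\le Ct^{-1/2}$ and $|\nabla\omega(x,t)|\le Ct\,|x|^{-N-3}$, one bounds the gradients of the three resulting terms by constants independent of $R$ on each $B_r$, $r<1$; the barrier is used precisely here, to show $J*H_R\le K/R$ on the annulus $\{R\le|x|<R+1\}$, which controls the boundary term in the representation. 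In short: the barrier alone does not give compactness; one needs the parabolic representation at the matched time $t=R^2$ to convert the decay $\Lambda_R\sim R^{-2}$ and $\int|\nabla\omega|\le Ct^{-1/2}$ into a uniform gradient bound for $\widetilde H_R$.
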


For the proof of this theorem we need a couple of lemmas. 
\begin{lema}\label{Hh1}
Let $R_n\rightarrow\infty$ be such that $\widetilde{H}_{R_n}\rightarrow H$  in $L^1_{loc}(B_1)$. Then $H$ is a solution to,
$$\begin{cases}
-\Delta H=\lambda_1 H, \quad&\text{in} \quad B_1\\
\ \ H=0\quad&\mbox{on}\quad\partial B_1.
\end{cases}
$$
\end{lema}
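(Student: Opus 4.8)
The plan is to pass to the limit in the equation satisfied by $\widetilde H_{R_n}$. First I would compute the equation for the rescaled eigenfunctions. Since $H_{R}$ solves $-\LL H_R=\Lambda_R H_R$ in $B_R$, substituting $\widetilde H_R(x)=H_R(Rx)$ and changing variables in the convolution shows that $\widetilde H_R$ solves a rescaled nonlocal equation: setting $J_R(z)=R^NJ(Rz)$ we get
\[
\int J_R(x-y)\big(\widetilde H_R(y)-\widetilde H_R(x)\big)\,dy = -\Lambda_R \widetilde H_R(x)\quad\text{in }B_1 ,
\]
with $\widetilde H_R=0$ outside $B_1$. Multiplying by $R^2$ and using that $R^2\Lambda_R\to A(J)\lambda_1$ by \eqref{comport}, the right-hand side $-R^2\Lambda_R\widetilde H_{R_n}$ converges (in $L^1_{loc}$, since $\widetilde H_{R_n}\to H$ there and $0\le \widetilde H_{R_n}\le 1$) to $-A(J)\lambda_1 H$. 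So it remains to identify the limit of $R^2$ times the rescaled nonlocal operator applied to $\widetilde H_{R_n}$ as $A(J)\Delta H$, in the sense of distributions on $B_1$.

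For that step I would test against $\varphi\in C_c^\infty(B_1)$ and move the operator onto $\varphi$ by the symmetry of $J_R$:
\[
R^2\int_{B_1}\Big(\int J_R(x-y)\big(\widetilde H_R(y)-\widetilde H_R(x)\big)\,dy\Big)\varphi(x)\,dx
= \int_{B_1}\widetilde H_R(x)\,\Big(R^2\!\int J_R(x-y)\big(\varphi(y)-\varphi(x)\big)\,dy\Big)\,dx ,
\]
where one should be slightly careful because $\varphi(y)$ is evaluated for $y$ possibly outside $B_1$, but $\widetilde H_R$ vanishes there and $\varphi$ is globally smooth, so the bookkeeping is harmless. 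Now $R^2\int J_R(x-y)(\varphi(y)-\varphi(x))\,dy=\int J(z)\,R^2\big(\varphi(x+z/R)-\varphi(x)\big)\,dz$; Taylor expanding $\varphi$ to second order, the first-order term vanishes because $J$ is radially symmetric ($\int J(z)z\,dz=0$), the second-order term yields $\frac12\int J(z)z_iz_j\,dz\,\partial_{ij}\varphi(x)=A(J)\Delta\varphi(x)$ again by radial symmetry, and the remainder is $O(1/R)$ uniformly on $\mathrm{supp}\,\varphi$ since $J$ has compact support and $\varphi\in C_c^\infty$. Hence this symbol converges uniformly to $A(J)\Delta\varphi$, and combined with $\widetilde H_{R_n}\to H$ in $L^1_{loc}$ the left side converges to $\int_{B_1}H\,A(J)\Delta\varphi\,dx$. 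Matching with the limit of the right-hand side gives $\int_{B_1}H\,A(J)\Delta\varphi\,dx=-\int_{B_1}A(J)\lambda_1 H\varphi\,dx$, i.e. $-\Delta H=\lambda_1 H$ in $B_1$ in the distributional sense; elliptic regularity then makes $H$ smooth and a classical solution.

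The boundary condition $H=0$ on $\partial B_1$ is the delicate point and I expect it to be the main obstacle, because $L^1_{loc}(B_1)$ convergence says nothing near $\partial B_1$. Here I would invoke the upper barrier announced in the introduction: the supersolution constructed to control the decay of $H_R$ near $\partial B_R$ gives, after rescaling, a bound of the form $\widetilde H_{R}(x)\le \omega(\mathrm{dist}(x,\partial B_1))$ for all large $R$, with $\omega(s)\to 0$ as $s\to 0^+$, uniformly in $R$. (Alternatively one can phrase it as: for every $\eta>0$ there is $\rho>0$ with $\widetilde H_R\le\eta$ on $B_1\setminus B_{1-\rho}$ for all large $R$.) Passing to the limit, $H$ inherits the same bound, so $H$ extends continuously to $\overline B_1$ with $H=0$ on $\partial B_1$. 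Note $H\not\equiv 0$: since $\|\widetilde H_R\|_{L^\infty}=1$ with the max attained at the center (by symmetry/the structure of the eigenfunction) and the uniform decay estimate confines the mass away from the boundary, the $L^1_{loc}$ limit $H$ is nontrivial — though for the statement as written this is not strictly required. This completes the identification.
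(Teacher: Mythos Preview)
Your argument for the interior equation is essentially the paper's: both test against $\varphi\in C_c^\infty(B_1)$, move the rescaled nonlocal operator onto the test function by symmetry, Taylor-expand to recognize $A(J)\Delta\varphi$ with a vanishing remainder, and pass to the limit using $R^2\Lambda_R\to A(J)\lambda_1$ together with $\widetilde H_{R_n}\to H$. One point worth noting: the paper's proof of this lemma actually stops after establishing $-\Delta H=\lambda_1 H$ in $B_1$ and does \emph{not} verify $H=0$ on $\partial B_1$ here, despite the statement; that boundary step is deferred to the proof of Theorem~\ref{teo-HR}, where it is carried out exactly via the barrier of Lemma~\ref{lema-barrera-HR}, just as you anticipate.
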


\begin{proof}
 In order to prove that $H$  is a weak solution to the equation, we let $\phi\in C_0^{\infty}(B_1)$ and,
\[
\begin{aligned}
A&(J)\int_{B_1} H(x)\Delta \phi(x)\,dx =\\
 & = A(J)\int_{B_1} \widetilde{H}_{R_n}(x)\Delta \phi(x)\,dx + A(J)\int_{B_1} \left(H(x)-\widetilde{H}_{R_n}(x)\right)\Delta \phi(x)\,dx\\
& =  \int_{B_1}\widetilde{H}_{R_n}(x)R^2\left(J_R*\phi(x)-\phi(x)\right)\, dx + A(J)\int_{B_1} \left(H(x)-\widetilde{H}_{R_n}(x)\right)\Delta \phi(x)\,dx\\
& \hspace{3em} -\int_{B_1}\widetilde{H}_{R_n}(x)\left(R^2(J_R*\phi(x)-\phi(x))-A(J)\Delta\phi(x)\right)\, dx\\
&= R^2\int_{B_1}\left(J_R*\widetilde{H}_{R_n}(x)-\widetilde{H}_{R_n}(x)\right)\phi(x)\, dx \\
&\hspace{3em}  +A(J)\int_{B_1} \left(H(x)-\widetilde{H}_{R_n}(x)\right)\Delta \phi(x)\,dx
-\int_{B_1}\widetilde{H}_{R_n}(x)O(R^{-3})\, dx\\
& = -R^2\Lambda_R\int_{B_1}\widetilde{H}_{R_n}(x)\phi(x)\, dx +A(J)\int_{B_1} \left(H(x)-\widetilde{H}_{R_n}(x)\right)\Delta \phi(x)\,dx\\
& \hspace{3em} -\int_{B_1}\widetilde{H}_{R_n}(x)O(R^{-3})\, dx.
\end{aligned}
\]
Since $\widetilde{H}_{R_n}\rightarrow H$ strongly in $L^2(B_1)$ and $R^2\Lambda_R\to A(J)\lambda_1$, by taking limit as $n$ tends to infinity we obtain,
$$ A(J)\int_{B_1} H(x)\Delta \phi(x)\,dx = -\lambda_1A(J)\int_{B_1}H(x)\phi(x)\,dx,$$
that is, $H$ satisfies the equation $-\Delta  H=\lambda_1 H, \text { in } B_1$.
\end{proof}

\medskip


\medskip

Our next result is the construction of a barrier for $ H_R$.
\begin{lema}\label{barrier} Let $h_1$ be the  positive eigenfunction corresponding to the first eigenvalue $\lam_1$ of the laplacian in $B_1$ with Dirichlet boundary conditions and the normalization $ 1=\max_{x\in B_1}h_1(x)$. Let us consider the function
$$v(x) = h_1\Big(\frac{x}{2R}\Big)\quad\mbox{for}\quad x\in B_{2R}.$$
There exists $C>0$, $R_0>0$ such that
\[
C\LL v(x)\le \LL H_R\quad\mbox{in}\quad B_R\quad\mbox{if}\quad R\ge R_0.
\]
\end{lema}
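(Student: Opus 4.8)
The plan is to exploit that $v(x)=h_1(x/(2R))$ is, up to the correct normalization of the nonlocal operator against the Laplacian, an eigenfunction-type object on the ball $B_{2R}$, which contains the support $B_R$ of $H_R$ with room to spare, so that $\LL v$ is strictly negative and of order $R^{-2}$ on $\overline{B_R}$, while $\LL H_R = -\Lambda_R H_R$ is also of order $R^{-2}$ but with a favorable sign structure. First I would compute $\LL v$ on $B_R$. Writing $J_{2R}(z)=(2R)^N J(2Rz)$ and using the consistency expansion already invoked in the proof of Lemma \ref{Hh1}, namely $(2R)^2\big(J_{2R}*\phi-\phi\big)=A(J)\Delta\phi+O(R^{-1})$ uniformly on compacts for smooth $\phi$, one gets
\[
\LL v(x)=\int J(x-y)\big(v(y)-v(x)\big)\,dy=\frac{1}{(2R)^2}\Big(A(J)\Delta h_1\big(\tfrac{x}{2R}\big)+O(R^{-1})\Big)=-\frac{A(J)\lambda_1}{(2R)^2}\,h_1\big(\tfrac{x}{2R}\big)+O(R^{-3}),
\]
valid for $x\in B_R$, because then $x/(2R)\in B_{1/2}$, a \emph{fixed compact subset} of $B_1$ where $h_1$ is smooth — this is exactly why one rescales by $2R$ and not $R$, so that no boundary layer of $h_1$ is ever seen. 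In particular, on $\overline{B_R}$ we have $h_1(x/(2R))\ge h_1(1/2)=:c_0>0$, hence $\LL v(x)\le -\,c_1/R^2$ for some $c_1>0$ once $R\ge R_0$.

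Next I would estimate $\LL H_R$ from below on $B_R$. Since $-\LL H_R=\Lambda_R H_R$ with $H_R>0$ on $B_R$, we have $\LL H_R=-\Lambda_R H_R\le 0$, but that is the wrong direction; the point is rather that $\LL H_R \ge -\Lambda_R \ge -A(J)\lambda_1(1+o(1))/R^2$ wherever $H_R\le 1$, which holds everywhere, so $\LL H_R\ge -c_2/R^2$ on $B_R$ for $R$ large, by \eqref{comport}. Wait — for the desired inequality $C\,\LL v\le \LL H_R$ with both sides negative, I want the left side \emph{more negative}: since $\LL v\le -c_1/R^2<0$ on $\overline{B_R}$, choosing $C>0$ large enough that $C c_1\ge c_2$ gives $C\,\LL v(x)\le -C c_1/R^2 \le -c_2/R^2\le \LL H_R(x)$ on $B_R$. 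So the lemma follows with $C=c_2/c_1$ and $R\ge R_0$. The only subtlety is to make the error terms uniform: I would state the consistency estimate as a short preliminary computation, noting $h_1\in C^\infty(\overline{B_{1/2}})$ and $J\in C_0^\infty$, so that the Taylor remainder controlling $(2R)^2(J_{2R}*h_1(\cdot/(2R))-h_1(\cdot/(2R)))-A(J)\Delta h_1(\cdot/(2R))$ is $O(R^{-1})$ with constant depending only on $\|D^3 h_1\|_{L^\infty(B_{1/2})}$ and the third moment of $J$.

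The main obstacle I anticipate is purely bookkeeping rather than conceptual: one must be careful that the expansion of $\LL v$ is applied only at points $x\in B_R$, where the argument $x/(2R)$ stays well inside $B_1$ and the convolution $J_{2R}*$ only probes points $y$ with $|y-x|\le \mathrm{diam}(\mathrm{supp}\,J)$, still comfortably inside $B_{2R}$ for $R\ge R_0$; thus $v$ is smooth on the relevant region and the local Taylor expansion is legitimate with no boundary interference. A secondary point is to absorb the $o(1)$ in $R^2\Lambda_R\to A(J)\lambda_1$ and the $O(R^{-1})$ consistency error into the choice of $R_0$ and the constant $C$; since both the lower bound $\LL v\le -c_1/R^2$ and the lower bound $\LL H_R\ge -c_2/R^2$ are of the same order $R^{-2}$, a single constant $C$ depending only on $J$, $N$, and $\lambda_1$ suffices, and the inequality becomes strict for $R$ large, which is all that is needed in the sequel.
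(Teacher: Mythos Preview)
Your proposal is correct and follows essentially the same approach as the paper's own proof: Taylor-expand $\LL v$ on $B_R$ to obtain $\LL v\le -c_1/R^2$ (using $h_1(x/(2R))\ge c_0>0$ there), bound $\LL H_R=-\Lambda_R H_R\ge -\Lambda_R\ge -c_2/R^2$ via $\|H_R\|_\infty=1$ and \eqref{comport}, and choose $C=c_2/c_1$. The only slip is cosmetic: by the radial symmetry of $J$ the odd moments vanish, so the consistency error is $O(R^{-4})$ (controlled by $\|D^4 h_1\|_{L^\infty(\overline{B_{1/2}})}$), not $O(R^{-3})$, though this does not affect the argument.
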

\begin{proof} Assume $R\ge1$.
By using Taylor's expansion and the symmetry of $J$ we get for $x\in B_R$,
$$\LL v(x) = A(J) \Delta v(x) + O\big(\max_{|\beta|=4}\|D^\beta v\|_{L^\infty(B_{R+1})}\big).$$
Then,
\begin{align} \label{cot1}
\begin{split}
  \LL v =
  & \frac{A(J)}4 R^{-2} \Delta h_1\Big(\frac{x}{2R}\Big)  + O(R^{-4})\\
  =& -\lam_1\frac {A(J)}4 R^{-2}  h_1\Big(\frac{x}{2R}\Big) + O(R^{-4})  \\
  \leq& -\frac18  \lam_1 A(J)R^{-2} h_1\Big(\frac{x}{2R}\Big)
\end{split}
\end{align}
if $R$ is large.

Here we have used that there exists a positive constant $c$ such that,
\begin{align} \label{cot12}
c<h_1\Big(\frac{x}{2R}\Big) \quad x\in B_R.
\end{align}

Finally, since $\lam_1A(J)R^{-2}=\Lam_R+o(1)$, we get for $R$ large enough,
\begin{align*}
\begin{split}
  \LL v \leq& -\frac1{16} \Lam_R  h_1\Big(\frac{x}{2R}\Big) \le - \frac c{16}\Lam_R\le -\frac c{16}\Lam_R H_R=\frac c{16}\LL H_R
\end{split}
\end{align*}
since $0\le H_R\le 1$.
\end{proof}

Recall that $h_1$ is radially symmetric, radially decreasing, smooth with $h_1(0)=1$. Let $\eta$ such that $h_1(x)=\eta(|x|)$.

Now we use the supersolution constructed in  Lemma \ref{barrier} in order to bound $H_R$. There holds,
\begin{lema} \label{lema-barrera-HR} Let $\eta(|x|)=h_1(x)$  with $h_1$ as in Lemma \ref{barrier}. There exist  constants $C,C_0>0$ and $R_0>0$ such that,
\[
H_R(x)\le C\Big\{\eta\Big(\frac {|x|}{2R}\Big)-\eta\Big(\frac 1{2}\Big)+\frac{C_0}R\Big\}\quad\mbox{if}\quad R\ge R_0.
\]
\end{lema}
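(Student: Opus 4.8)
The plan is to use the barrier $v(x)=h_1(x/2R)$ from Lemma \ref{barrier} to dominate $H_R$ after a suitable subtraction that makes the comparison function vanish (or become negative) outside $B_R$, so that the maximum principle for $\LL+\Lambda_R$ can be applied. Concretely, set
\[
W(x)=C\Big\{\eta\Big(\tfrac{|x|}{2R}\Big)-\eta\big(\tfrac12\big)+\tfrac{C_0}{R}\Big\},
\]
so that the claimed inequality is $H_R\le W$ in $B_R$ (and trivially in $\R^N\setminus B_R$ once we know $W\ge 0$ there up to the relevant range). First I would record the two elementary facts about $\eta$ that will be used repeatedly: $\eta$ is smooth, radially decreasing, $\eta(0)=1$, and on $[0,1/2]$ one has $\eta(r)-\eta(1/2)\ge 0$ with a definite positive lower bound $c>0$ for $r\le 1/2$ coming from \eqref{cot12}; moreover $|\eta'|$ is bounded, which controls the difference $\eta(|x|/2R)-\eta(1/2)$ near $|x|=R$ by $\le C_1(R-|x|)/R\le C_1/R$ there.

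Next I would run the comparison argument. By Lemma \ref{barrier} there are $C>0$, $R_0>0$ with $C\,\LL v\le \LL H_R$ in $B_R$ for $R\ge R_0$; equivalently $\LL(Cv-H_R)\le 0$ in $B_R$, i.e.\ $Cv-H_R$ is a supersolution of $\LL$ there. The function $Cv(x)=C\eta(|x|/2R)$ is, however, strictly positive on all of $B_{2R}$, in particular on the ``collar'' $B_{R+1}\setminus B_R$ where the values of $H_R$ (which is supported in $B_R$) are compared through the nonlocal operator; this is exactly why one cannot compare with $Cv$ directly and must subtract the constant $C\eta(1/2)$ and add the corrective term $CC_0/R$. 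I would therefore consider
\[
\Psi(x)=C\Big\{\eta\Big(\tfrac{|x|}{2R}\Big)-\eta\big(\tfrac12\big)\Big\}+\tfrac{CC_0}{R}-H_R(x)
\]
and show: (i) $\LL\Psi\le \Lambda_R\,(\text{something}\le 0)$-type inequality actually $-\LL\Psi\ge \Lambda_R\Psi$ fails in general, so instead I argue with the plain operator: $\LL\Psi=C\,\LL v-\LL H_R\le 0$ in $B_R$, since adding a constant does not change $\LL$; (ii) $\Psi\ge 0$ on $\R^N\setminus B_R$ — here I use that $H_R=0$ there, that $\eta(|x|/2R)-\eta(1/2)\ge -C_1/R$ on the collar $R\le|x|\le R+1$ by the mean value theorem (and $\eta(|x|/2R)-\eta(1/2)$ could be negative only by $O(1/R)$ on that collar since $J$ has compact support so only $|x|\le R+\mathrm{diam}(\mathrm{supp}\,J)$ matters), so choosing $C_0$ larger than $C_1$ (times a fixed constant depending on $\mathrm{supp}\,J$) forces $\Psi\ge 0$ there; and (iii) $\inf_{B_R}\Psi\ge 0$. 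For (iii): if $\Psi$ attained a negative minimum at some $x_0\in B_R$, then $\LL\Psi(x_0)=\int J(x_0-y)(\Psi(y)-\Psi(x_0))\,dy>0$ unless $\Psi$ is constant near $x_0$, contradicting $\LL\Psi\le 0$; a standard strong-maximum-principle/connectedness propagation argument (using that $J>0$ near $0$ and $B_R$ is connected, and that $\Psi\ge0$ outside) then rules out the negative minimum entirely.

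The main obstacle is step (ii)–(iii): making the nonlocal maximum principle bite despite the collar. Because $\LL$ is nonlocal, the sign of $\LL\Psi$ at an interior point sees values of $\Psi$ in $B_R^c$, so the barrier must genuinely dominate $H_R$ on a full neighborhood of $\partial B_R$, not just on $\partial B_R$; quantifying how negative $C\eta(|x|/2R)-C\eta(1/2)$ can be on that neighborhood (it is $O(1/R)$, by smoothness of $\eta$ and compact support of $J$) and absorbing it into the $CC_0/R$ term is the crux, and it is precisely this that fixes the size of $C_0$. Once the constants $C$, $C_0$, $R_0$ are pinned down by Lemma \ref{barrier}, the bounds on $\eta'$, and $\mathrm{diam}(\mathrm{supp}\,J)$, the conclusion $H_R\le W$ in $B_R$ follows from the maximum principle argument, which is exactly the asserted estimate.
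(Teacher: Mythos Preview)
Your proposal is correct and follows essentially the same route as the paper: define $w(x)=C\{\eta(|x|/2R)-\eta(1/2)+C_0/R\}$, use Lemma~\ref{barrier} (plus the fact that $\LL$ kills constants) to get $\LL w\le\LL H_R$ in $B_R$, choose $C_0$ via the bound on $|\eta'|$ so that $w\ge 0=H_R$ on the collar $\{R\le|x|<R+\mathrm{diam}(\mathrm{supp}\,J)\}$, and conclude by the nonlocal comparison principle. The paper's proof is terser---it simply invokes ``the Comparison Principle'' where you spell out the negative-minimum argument---but the ideas and the choice of constants are identical.
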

\begin{proof} In Lemma \ref{barrier} we found a constant $C>0$ and $R_0>0$ such that, for any $C_0\in\R$, $R\ge R_0$, the function
\[
w(x)=C\Big\{\eta\Big(\frac {|x|}{2R}\Big)-\eta\Big(\frac 1{2}\Big)+\frac{C_0}R\Big\}
\]
satisfies
\[
\LL w\le \LL H_R\quad\mbox{in}\quad B_R.
\]

In order to be able to apply the comparison principle we need to show that, for some constant $C_0$, there holds that
\begin{equation}\label{bound-w}
w\ge 0\quad\mbox{in}\quad \{x\in R^N\setminus B_R\,/\,\mbox{dist}(x,B_R)<1\}=\{R\le |x|<R+1\}.
\end{equation}

And, in fact \eqref{bound-w} holds if $C_0\ge \|\eta'\|_{L^\infty(0,1)}$.

Finally,  by applying the Comparison Principle,  the lemma is proved.
\end{proof}

From this lemma we get the following corollary that will be used to bound the derivatives of $\widetilde H_R$.
\begin{cor}\label{coro-HR1} There exists a constant $K>0$ such that
\[
J*H_R\le \frac KR\quad\mbox{in}\quad \{R\le |x|<R+1\}.
\]
\end{cor}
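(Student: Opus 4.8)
The plan is to bound $J*H_R(x)$ for $x$ in the shell $\{R\le |x|<R+1\}$ by integrating the pointwise bound on $H_R$ from Lemma \ref{lema-barrera-HR} against $J$. Since $J$ has compact support — say $\operatorname{supp}J\subset B_{\rho}$ for some $\rho>0$ (here $\rho$ can be taken to be $1$ after a harmless rescaling, consistent with the ``dist$(x,B_R)<1$'' convention used above) — the convolution $J*H_R(x)=\int J(x-y)H_R(y)\,dy$ only sees values $H_R(y)$ with $|y-x|<\rho$, and $H_R(y)=0$ unless $y\in B_R$. So I only need to control $H_R(y)$ for $y\in B_R$ with $R-\rho<|y|<R$, i.e.\ in a thin boundary layer of $B_R$.

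The key estimate is: for $R-\rho\le |y|\le R$ and $R\ge R_0$,
\[
H_R(y)\le C\Big\{\eta\Big(\tfrac{|y|}{2R}\Big)-\eta\Big(\tfrac12\Big)+\tfrac{C_0}{R}\Big\}.
\]
Since $\eta$ is smooth on $[0,1]$ (being the radial profile of $h_1\in C^\infty$), it is Lipschitz there, with Lipschitz constant $L:=\|\eta'\|_{L^\infty(0,1)}$. For $|y|$ in the indicated range, $\tfrac{|y|}{2R}\in[\tfrac12-\tfrac{\rho}{2R},\tfrac12]$, so
\[
0\le \eta\Big(\tfrac{|y|}{2R}\Big)-\eta\Big(\tfrac12\Big)\le L\Big(\tfrac12-\tfrac{|y|}{2R}\Big)\le \frac{L\rho}{2R}.
\]
Hence $H_R(y)\le C\big(\tfrac{L\rho}{2}+C_0\big)\tfrac1R=:\tfrac{K_0}{R}$ on this boundary layer. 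Then, for $x$ in the shell $\{R\le|x|<R+1\}$,
\[
J*H_R(x)=\int_{|x-y|<\rho,\ y\in B_R} J(x-y)H_R(y)\,dy\le \frac{K_0}{R}\int J = \frac{K_0}{R},
\]
so the claim holds with $K=K_0$. (If one wants $J*H_R\le K/R$ with the shell width normalized differently, the only change is in the numerical value of the constants $\rho$, $L$, $K_0$.)

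The only mild subtlety — and the single point that needs a careful line — is verifying that the arguments $\tfrac{|y|}{2R}$ stay inside $[0,1]$ where $\eta$ and its derivative are controlled: since $|y|<R$ we have $\tfrac{|y|}{2R}<\tfrac12<1$, so this is automatic, and similarly the Lipschitz bound is applied only on a compact subinterval of $(0,1)$ where $\eta'$ is bounded. Everything else is the routine combination of the compact support of $J$, the vanishing of $H_R$ outside $B_R$, and the boundary decay from Lemma \ref{lema-barrera-HR}; there is no real obstacle beyond bookkeeping of constants.
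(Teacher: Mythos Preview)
Your proof is correct and follows essentially the same route as the paper's: restrict to the boundary layer $R-1<|y|<R$ using the compact support of $J$ and the vanishing of $H_R$ outside $B_R$, apply the barrier bound from Lemma~\ref{lema-barrera-HR}, and use the Lipschitz continuity of $\eta$ near $\tfrac12$ to estimate $\eta(|y|/(2R))-\eta(1/2)=O(1/R)$. The paper simply asserts the last step without writing out the mean-value estimate, but the argument is identical.
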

\begin{proof}
Let $R\le |x|<R+1$. Then, if $J(x-y)H_R(y)\neq0$, there holds that $R-1\le|y|<R$. Therefore,
\[
H_R(y)\le w(y)=C\Big\{\eta\Big(\frac {|y|}{2R}\Big)-\eta\Big(\frac 1{2}\Big)+\frac{C_0}R\Big\}\le \frac KR
\]
for a certain constant $K>0$ and,
\[
(J*H_R)(x)=\int J(x-y)H_R(y)\,dy\le \frac KR.
\]
\end{proof}


In order to prove our main result in this section, we will use an integral representation formula for $H_R$. To this end, let us recall some results on the fundamental solution to the operator $\partial_t-\LL$.

In \cite{ChChR} the authors found that the fundamental solution  of the nonlocal operator $\partial_t - L$ in the whole space, is
$$F(x,t)=e^{-t}\delta(x) + \omega(x,t)$$
where $\delta$ is the Dirac mass at the origin in $\R^N$ and $\omega$ is a smooth function.

Then, in \cite{TW2} pointwise and integral  estimates for $w$ and its derivatives where obtained. In particular,
\begin{equation} \label{estim1}
  |\nabla \omega(x,t)|\leq C \frac{t}{|x|^{N+3}},
\end{equation}
and,
\begin{equation} \label{estim2}
  \int_{\R^N}|\nabla \omega(x,t)|\leq C t^{-\frac{1}{2}}.
\end{equation}

%
%

\medskip

We can now prove our main result in this section.


\begin{proof}[Proof of Theorem \ref{teo-HR}]
The proof follows from the Arzel\'a-Ascoli Theorem.

In order to get uniform estimates of the derivatives of $\widetilde H_R$ let us observe that the first eigenfunction of \eqref{NLeigen} is the unique bounded solution of the following non-homogeneous equation defined in the whole $\R^N$,
\begin{equation}\label{problemx1}
\begin{cases}
w_t-\LL w =  \Lam_R w - \X_{B_R^c}(J*w) & \mbox{in }\ \R^N\times(0,\infty),\\
w(x,0) =   H_R(x) & \mbox{in }\ \R^N.
\end{cases}
\end{equation}

 As the solution of \eqref{problemx1} is defined in the whole space, it can be expressed in terms of the fundamental solution $F=F(x,t)$ by means of the variation of constants formula. Thus, for $t\geq 0$ we have
\begin{align}\label{ecH}
\begin{split}
  &H_R(x)=e^{-t} H_R(x) + \int_{\R^N} \omega(x-y,t) H_R(y) \; dy + \Lam_R \left(\int_0^t e^{-(t-s)} \; ds \right) H_R(x)\\
  &- \Big(\int_0^t e^{-(t-s)}\; ds\Big) \X_{B_R^c}(x)(J*H_R)(x)  + \Lam_R \int_0^t \int_{\R^N} \omega(x-y,t-s) H_R(y) \, dy\,  ds\\
  &- \int_0^t \int_{B_R^c} \omega(x-y,t-s )(J*H_R)(y) \, dy\,  ds.
\end{split}
\end{align}
For  $x\in B_{R}$, there holds that $ \X_{B_R^c}(x)=0$. Thus,   we can rewrite \eqref{ecH} for $x\in B_R$ as,
\begin{align}\label{ecH1}
\begin{split}
  (1-e^{-t})&(1-\Lam_R)H_R(x)= \\
  =&\int_{\R^N} \omega(x-y,t) H_R(y) \, dy + \Lam_R \int_0^t \int_{\R^N} \omega(x-y,t-s) H_R(y) \, dy\,  ds\\
    &  - \int_0^t \int_{B_R^c} \omega(x-y,t-s) (J*H_R)(y) \, dy \, ds.
\end{split}
\end{align}

Observe that we are free to select the parameter $t$ in expression \eqref{ecH1}.

Let us now rescale the identity \eqref{ecH1}. We have,
\[
\begin{aligned}
(1-e^{-t})&(1-\Lam_R)\widetilde H_R(x)= \\
  =&\int_{\R^N} \omega(Rx-y,t) H_R(y) \; dy + \Lam_R \int_0^t \int_{\R^N} \omega(Rx-y,t-s) H_R(y) \; dy  ds\\
    &  - \int_0^t \int_{B_R^c} \omega(Rx-y,t-s) (J*H_R)(y) \; dy  ds
    \\
    :=& (i) + (ii) - (iii).
    \end{aligned}
    \]

In order to  bound  the derivatives of $(i)$, $(ii)$ and $(iii)$  we will choose the value $t=R^2$. First, let us estimate the derivative of $(i)$. By \eqref{estim2}, since $0\le H_R\le 1$, it follows that
\begin{align} \label{deri1}
\begin{split}
  \big| \nabla\int_{\R^N} \omega(Rx-y,t)  H_R(y) \; dy \big|&= R\big|\int_{\R^N} \nabla\omega(Rx-y,t)   H_R(y) \; dy\big| \\
      &\leq  R \int_{\R^N} |\nabla \omega(y,t) | \; dy \\
   &\leq  C R t^{-\frac{1}{2}}=C.
\end{split}
\end{align}

Similarly, since $\Lam_R\le CR^{-2}$,
\begin{align} \label{deri2}
\begin{split}
 \Lam_R \big|\nabla \int_0^t \int_{\R^N} &\omega(Rx-y,t-s) H_R(y) \; dy  ds\big|\\
   &\leq  \Lam_R R \int_0^t \int_{\R^N} |\nabla \omega(y,t-s)|  \; dy  ds\\
 &\leq  C R^{-1}  \int_0^t (t-s)^{-\frac{1}{2}} \; ds\\
 &\leq  C R^{-1}  t^\frac{1}{2}=C.
\end{split}
\end{align}

Now, by using the pointwise estimate \eqref{estim1}, Corollary \ref{coro-HR1} and the fact that $supp \ (J*H_R) =B_{R+1}$ we can bound the derivative of $(iii)$ as
\begin{align*}
\big|\nabla &\int_0^t \int_{B_R^c} \omega(Rx-y,t-s) (J*H_R)(y) \; dy  ds\big| = \\
&=R\big| \int_0^t \int_{B_R^c} \nabla \omega(Rx-y,t-s) (J* H_R)(y) \; dy  ds\big| \\
&\leq C\Big| \int_0^t \int_{R<|y|<R+1} \frac{t-s}{|Rx-y|^{N+3}} \; dy  ds\Big| \\
&\leq C t^2 \int_{R<|y|<R+1} \frac{1}{|Rx-y|^{N+3}} \; dy.
\end{align*}

Assume now, $|x|\le r$ with $0< r<1$. Then, if $|y|>R$ we get that $|Rx-y|\geq R(1-r)$ and then,
\begin{align} \label{deri3}
\begin{split}
\big|\nabla &\int_0^t \int_{B_R^c} \omega(x-y,t-s) (J*H_R)(y) \; dy  ds\big|  \leq \\
&\leq CR^{-N-3} t^2 \frac{1}{(1-r)^{N+3}} |\{R<|y|<R+1\}| \\
&\leq C_rR^{-4} t^2=C_r.
\end{split}
\end{align}

Thus, since $(1-e^{-R^2})(1-\Lam_R)\ge \alpha_0>0$ for $R\ge R_0$, we conclude that for every $0<r<1$ there exists $C>0$ such that,
\[
\sup_{|x|\le r}|\nabla \widetilde H_R(x)|
\le C
\]
if $R\ge R_0$.

We can apply Arzel\`a- Ascoli on every ball $B_r$ with $0<r<1$ to get, for every sequence $R_n\to\infty$ a subsequence $\widetilde H_{R_{n_k}}$ uniformly convergent in $B_r$. Then, a diagonal argument gives a subsequence uniformly convergent on every compact subset of $B_1$ to a function $H$. By the previous lemmas, we know that $H$ is a solution to
\[
-\Delta H=\lam_1 H\quad\mbox{in}\quad B_1.
\]

Moreover, $0\le H\le 1$. Let us see that $H\in C(\overline B_1)$ with $H=0$ on $\partial B_1$. In fact, we show that the subsequence $\widetilde H_{R_{n_k}}$ converging to $H$ uniformly on compact subsets of $B_1$ is actually uniformly convergent in $B_1$. In fact,
%
we use  Lemma \ref{lema-barrera-HR} to get for $\ep>0$,
\[
\widetilde H_R(x)\le C\Big(\eta\big(\frac{|x|}2\big)-\eta\big(\frac12\big)+\frac{C_0}R\Big)<\frac\ep2
\]
if $1-|x|<\delta_0$ and $R\ge R_0$.

On the other hand, for every $x\in B_1$, by taking limit as $k\to\infty$ we find that,
\[H(x)\le C\Big(\eta\big(\frac{|x|}2\big)-\eta\big(\frac12\big)\Big)<\frac\ep2\]
if $1-|x|<\delta_0$.

Observe that, in particular, $H\in C(\overline B_1)$ with $H=0$ on $\partial B_1$.

 Then,
\[
|\widetilde H_{R_{n_k}}(x)-H(x)|\le \ep\quad\mbox{if}\quad |x|>1-\delta_0,\quad k\ge k_0.
\]

On the other hand, due to the uniform convergence of $\widetilde H_{R_{n_k}}$ to $H$ in $\overline B_{1-\delta_0}$,
\[
|\widetilde H_{R_{n_k}}(x)-H(x)|\le \ep\quad\mbox{if}\quad |x|\le 1-\delta_0,\quad k\ge k_1.
\]

So that, the convergence is uniform in $B_1$, $H\in C(\overline B_1)$ with $H=0$ on
$\partial B_1$. So that, $H=h_1$ is independent of the subsequence and  the theorem is proved.
\end{proof}

\section{Back to the evolutionary problem. Construction of a Barrier.}

In this section we construct a barrier for the nonlocal problem which is similar to the one constructed in \cite{GMVE} for the laplacian. This barrier is a function of separated variables involving the eigenfunctions $H_R$ studied in Section \ref{sect-HR}.

\medskip

In order to be able to further analyze our solution $u$, we state a  result that is needed because of the lack of a regularizing effect of the nonlocal diffusion equation.

\begin{lema} \label{lema-inf-loc} Let $0\le u_0\in L^\infty$, $u_0\not\equiv0$. Then, for every $R>0$, $t>0$,
\begin{equation}\label{eq-inf-loc}
\inf_{x\in B_R }u(x,t)>0.
\end{equation}
\end{lema}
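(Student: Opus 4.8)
The plan is to exploit the variation of constants (Duhamel) formula for the nonlocal equation together with the structure of the fundamental solution $F(x,t)=e^{-t}\delta(x)+\omega(x,t)$, where $\omega$ is smooth and strictly positive for $t>0$. Since $u$ solves $u_t=\LL u-u^p$ with $0\le u_0\in L^\infty$, $u_0\not\equiv 0$, and $u$ is bounded on finite time intervals, say $0\le u\le M$ on $\R^N\times[0,T]$, we may write, for $t\in(0,T]$,
\[
u(x,t)=e^{-t}u_0(x)+\int_{\R^N}\omega(x-y,t)u_0(y)\,dy-\int_0^t\int_{\R^N}F(x-y,t-s)\,u^p(y,s)\,dy\,ds.
\]
Because $u^p\ge 0$, the last term only helps in the wrong direction, so I would instead compare $u$ with the solution $\underline u$ of the linear equation $\underline u_t=\LL\underline u-M^{p-1}\underline u$ with the same initial datum: since $u^p=u^{p-1}u\le M^{p-1}u$ on $[0,T]$, the comparison principle gives $u\ge\underline u$ on $\R^N\times[0,T]$. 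The function $\underline u$ is explicitly $\underline u(x,t)=e^{-M^{p-1}t}\,v(x,t)$ where $v$ solves the pure nonlocal diffusion $v_t=\LL v$, $v(\cdot,0)=u_0$; hence $v(x,t)=e^{-t}u_0(x)+\int\omega(x-y,t)u_0(y)\,dy$.

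The key step is then to show $\inf_{B_R}v(\cdot,t)>0$ for every $R>0$, $t>0$, which reduces to showing $\inf_{x\in B_R}\int_{\R^N}\omega(x-y,t)u_0(y)\,dy>0$. Since $u_0\not\equiv 0$ and $u_0\ge 0$, there is a ball $B_\rho(x_0)$ and $\sigma>0$ with $u_0\ge\sigma$ on $B_\rho(x_0)$, so the integral is bounded below by $\sigma\int_{B_\rho(x_0)}\omega(x-y,t)\,dy$. The remaining point is positivity and a uniform lower bound for $\omega(z,t)$ on the compact set $\{|z|\le R+|x_0|+\rho\}$: I would invoke that $\omega(\cdot,t)$ is continuous, and that $\omega>0$ everywhere for $t>0$ (the fundamental solution of the nonlocal diffusion, being $\mathcal L$ a Markov-type generator, spreads mass to all of $\R^N$ instantaneously through the absolutely continuous part; this follows from the explicit series representation $\omega(x,t)=e^{-t}\sum_{k\ge 1}\frac{t^k}{k!}J^{*k}(x)$ valid once $J$ is, e.g., positive near the origin — which it is, being $C_0^\infty$, radial, $\ge 0$, $\int J=1$, so $J>0$ on a neighborhood of $0$, whence $J^{*k}>0$ on $\R^N$ for $k$ large). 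Thus $\omega(\cdot,t)>0$ on $\R^N$, continuous, so it attains a positive minimum on the compact set in question, giving a positive lower bound for $v(x,t)$ uniform in $x\in B_R$, and therefore $\inf_{B_R}u(\cdot,t)\ge e^{-M^{p-1}t}\inf_{B_R}v(\cdot,t)>0$.

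The main obstacle is the strict positivity of $\omega(\cdot,t)$ on all of $\R^N$: the convolution powers $J^{*k}$ are individually supported in $k\,\mathrm{supp}\,J$, so no single term of the series is positive everywhere, and one must argue that summing over all $k\ge k_0$ (with $k_0$ large enough that $J^{*k_0}>0$ on $\R^N$, which holds since $J>0$ near $0$) yields a function that is positive everywhere; alternatively, this positivity may simply be quoted from the pointwise lower bounds for $\omega$ established in \cite{TW2}. A secondary technical point is making the comparison principle applicable: one works on a fixed time interval $[0,T]$ with $M=\|u\|_{L^\infty(\R^N\times[0,T])}$, obtains the bound for $t\le T$, and since $T$ is arbitrary the conclusion holds for all $t>0$.
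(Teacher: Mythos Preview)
Your strategy is sound and reaches the conclusion, but by a somewhat different route than the paper's. The paper also linearizes the absorption, writing $u_t+Au\ge J*u$ with $A=1+\|u_0\|_\infty^{p-1}$ to get
\[
u(x,t)\ge e^{-At}u_0(x)+\int_0^t e^{-A(t-s)}(J*u(\cdot,s))(x)\,ds,
\]
but instead of invoking the smooth part $\omega$ of the fundamental solution and its strict positivity, it argues by contradiction: if $u(x_n,t)\to 0$ along a sequence $x_n\to\bar x\in\overline B_R$, continuity of $J*u(\cdot,s)$ in $x$ forces $u\equiv 0$ on $B_1(\bar x)\times(0,t)$, and a connectedness/continuation argument then yields $u\equiv 0$ on $\R^N\times(0,t)$, contradicting $u\ge e^{-At}u_0$. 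Your approach is more direct and produces an explicit lower bound once $\omega>0$ is known; the paper's avoids the structure of $\omega$ entirely and uses only $J$.

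There is one genuine slip in your argument: no convolution power $J^{*k_0}$ is positive on all of $\R^N$, since each $J^{*k}$ is compactly supported in $k\cdot\mathrm{supp}\,J$ (you noted this yourself two lines earlier). What is true, and is all you need, is that if $J>0$ on $B_r(0)$ then $J^{*k}>0$ on $B_{kr}(0)$; hence for the \emph{fixed compact set} $\{|z|\le R+|x_0|+\rho\}$ a single sufficiently large $k_0$ makes $J^{*k_0}>0$ there, and the term $e^{-t}\frac{t^{k_0}}{k_0!}J^{*k_0}$ already furnishes the uniform positive lower bound for $\omega$. A minor secondary point: for $u_0\in L^\infty$, $u_0\ge 0$, $u_0\not\equiv 0$, you can only guarantee $u_0\ge\sigma$ on a measurable set of positive measure, not on a ball; the integral estimate goes through unchanged with that set in place of $B_\rho(x_0)$, since $\omega(\cdot,t)$ is continuous and positive there.
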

\begin{proof}

We recall some results that can be found, for instance, in \cite{TW1}. First,  $u\in L^\infty$ and bounded by $\|u_0\|_\infty$. Moreover, $u\ge0$ since $v\equiv0$ is a solution to the equation and a comparison principle for bounded solutions holds (see, for instance \cite{LW}).

 Moreover, $u(x,t)>0$ for every $x\in \R^N$, $t>0$. In fact, let $A\ge 1+\|u_0\|_\infty^{p-1}$. Then, since $0\le u\le \|u_0\|_\infty$,
 \[
 u_t+Au\ge u_t+u+u^p= J*u.
 \]
 Thus,
 \begin{equation}\label{ineq-u}
 u(x,t)\ge e^{-At}u_0(x)+\int_0^te^{-A(t-s)}\big(J*u(\cdot,s)\big)(x)\,dx
 \end{equation}
 so that, if $u(x,t)=0$ for some $x\in \R^N$, $t>0$ there holds,
 \[
 0\ge \int_0^te^{-A(t-s)}\big(J*u(\cdot,s)\big)(x)\,dx\ge 0.
 \]
 We deduce that $u(y,s)=0$ in $B_1(x)\times(0,t)$ and, since $\R^N$ is connected, a continuation argument gives that $u=0$ in $\R^N\times(0,t)$. But, by \eqref{ineq-u},
 \[
 u(x,t)\ge e^{-At}u_0(x)
 \]
 and $u_0\not\equiv0$.

 Therefore, $u(x,t)>0$ in $\R^N\times(0,\infty)$.

 Let us now prove \eqref{eq-inf-loc}. In not, there exists a sequence $\{x_n\}\subset B_{R}$ such that $u(x_n,t)\to0$. Without loss of generality we may assume that $x_n\to \bar x\in \overline B_{R}$. Going back to \eqref{ineq-u} and using that $J*u(\cdot,s)$ is a continuous function in $\R^N$ we get
 \[
 0\leftarrow u(x_n,t)\ge \int_0^te^{-A(t-s)}\big(J*u(\cdot,s)\big)(x_n)\,dx\to
 \int_0^te^{-A(t-s)}\big(J*u(\cdot,s)\big)(\bar x)\,dx.
 \]
 We deduce that $u=0$ in $B_1(\bar x)\times(0,t)$, a contradiction.
\end{proof}

Now, we construct the barrier.
\begin{lema} \label{lema2.3}
Let  $\Lam_R$ be the principal eigenvalue of \eqref{NLeigen}  in the ball $B_R$ and $H_R$ the  positive eigenfunction with the normalization $\|H_R\|_{L^\infty(B_R)}=1$. Assume $0\le u_0 \in  L^\infty(\R^N)$ and let $u$ be the unique bounded solution of \eqref{problem}. Then, the following inequality holds in $B_R\times (0,+\infty)$:
  \begin{equation}
    u(x,t)\geq \psi_R(t)H_R(x),
  \end{equation}
  where $\psi_R$ is the solution to
\begin{equation} \label{eq1}
  \begin{cases}
    \frac{d}{dt}\psi_R+\Lam_R \psi_R +\psi_R^p=0   \quad &\textrm{in}\quad (0,\infty)\\
    \displaystyle{\psi_R(0)=c=\inf_{x\in B_R}  \frac{u_0(x)}{H_R(x)}}.
  \end{cases}
\end{equation}
\end{lema}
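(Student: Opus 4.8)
The plan is to verify that the separated-variables function $\underline u(x,t) := \psi_R(t) H_R(x)$ is a subsolution to \eqref{problem} in $B_R\times(0,\infty)$ and then apply a comparison principle. The key observations are that $\psi_R$ is nonnegative (and bounded above by $\psi_R(0)=c$) because the ODE in \eqref{eq1} has the explicit decaying behavior of a Bernoulli-type equation, and that $\psi_R$ is well-defined for all $t>0$ with $c\ge 0$ since the right-hand side $-\Lam_R\psi_R-\psi_R^p$ is locally Lipschitz and pushes the solution downward, so no blow-up occurs; moreover $c\ge0$ is finite because, by Lemma \ref{lema-inf-loc}, $\inf_{B_R}u_0/H_R$ is attained as a nonnegative quantity — here one should be slightly careful: $H_R$ vanishes on $\partial B_R$, so one works with $c=\inf_{x\in B_R} u_0(x)/H_R(x)$, which could a priori be $0$, and that is perfectly fine (the inequality is then trivial but still informative through its dependence on $t$ only if $c>0$; if $c=0$ the subsolution is identically $0$ and the statement holds trivially).

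Next I would carry out the subsolution computation. For $x\in B_R$ and $t>0$,
\[
\partial_t \underline u - \LL \underline u + \underline u^p
= \psi_R'(t) H_R(x) - \psi_R(t)\,\LL H_R(x) + \psi_R(t)^p H_R(x)^p.
\]
Using $-\LL H_R = \Lam_R H_R$ in $B_R$ (note $H_R=0$ outside $B_R$, so $\LL H_R(x)=\int J(x-y)H_R(y)\,dy - H_R(x)$ is exactly $-\Lam_R H_R(x)$ for $x\in B_R$ by \eqref{NLeigen}), this becomes
\[
\big(\psi_R'(t)+\Lam_R\psi_R(t)\big)H_R(x) + \psi_R(t)^p H_R(x)^p
= -\psi_R(t)^p H_R(x) + \psi_R(t)^p H_R(x)^p
= \psi_R(t)^p H_R(x)\big(H_R(x)^{p-1}-1\big),
\]
where I used the ODE $\psi_R'+\Lam_R\psi_R = -\psi_R^p$. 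Since $0\le H_R\le 1$ in $B_R$ and $p>1$, we have $H_R^{p-1}-1\le 0$, and $\psi_R\ge0$, $H_R\ge0$, so the whole expression is $\le 0$. Hence $\underline u$ is a subsolution of the equation in $B_R\times(0,\infty)$.

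Then I would set up the comparison. On the parabolic-type boundary we need $\underline u \le u$: at $t=0$, $\underline u(x,0)=\psi_R(0)H_R(x)=c\,H_R(x)\le u_0(x)$ by the definition of $c$; for $x\in\R^N\setminus B_R$ and all $t\ge0$, $\underline u(x,t)=0\le u(x,t)$ since $u\ge0$ (and indeed $u>0$ for $t>0$ by Lemma \ref{lema-inf-loc}). Because both $u$ and $\underline u$ are bounded, the comparison principle for bounded sub/supersolutions of the nonlocal equation — the same one invoked in Lemma \ref{lema-inf-loc}, valid on the whole space with the understanding that $\underline u$ extended by $0$ outside $B_R$ is a global subsolution dominated by $u$ on $B_R^c\times\{t\ge0\}\cup B_R\times\{t=0\}$ — yields $u\ge \underline u = \psi_R H_R$ in $B_R\times(0,\infty)$, as claimed.

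The main obstacle is making the comparison argument rigorous in the nonlocal setting with a "Dirichlet" region: one must check that $\psi_R H_R\,\mathbf 1_{B_R}$, extended by zero, is genuinely a subsolution of $w_t-\LL w+w^p\le 0$ \emph{tested against $u$} on all of $\R^N$ — i.e., that on $B_R$ the inequality above still holds when $\LL$ is the full nonlocal operator (it does, precisely because the eigenfunction equation \eqref{NLeigen} already accounts for the zero exterior values), and that on $B_R^c$ one has the one-sided comparison from $u\ge 0$ so no sign condition is violated there. This is the kind of argument handled by the comparison principle for bounded solutions cited from \cite{LW,TW1}; once that framework is in place the proof is immediate. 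A secondary point worth a line is the global existence and nonnegativity of $\psi_R$: writing $\psi_R(t) = \big(e^{(p-1)\Lam_R t}(c^{-(p-1)}+ \tfrac{1}{\Lam_R}(e^{(p-1)\Lam_R t}-1))\big)^{-1/(p-1)}$ when $c>0$ (and $\psi_R\equiv 0$ when $c=0$) shows $0\le\psi_R\le c$ for all $t\ge0$, so the subsolution is bounded and the comparison is legitimate.
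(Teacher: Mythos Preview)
Your proof is correct and follows essentially the same approach as the paper: define $w=\psi_R H_R$, compute $w_t-\LL w+w^p$, use the eigenfunction equation and the ODE to reduce to $\psi_R^p H_R(H_R^{p-1}-1)\le 0$, check the initial and exterior inequalities, and invoke the comparison principle. The only minor slip is the appeal to Lemma~\ref{lema-inf-loc} to justify finiteness of $c$ for $u_0$ (that lemma concerns $u(\cdot,t)$ for $t>0$, not $u_0$); but since you correctly allow $c=0$ and handle that case trivially, this does not affect the argument.
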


\begin{proof}
  We set $w(x,t)=\psi_R(t)H_R(x)$. Then, for $x\in B_R$,
  \begin{align*}
    w_t-\LL w + w^p &= H_R \frac{d}{dt}\psi_R - \psi_R \LL H_R + \psi_R^p H_R^p \\
    &= H_R \frac{d}{dt}\psi_R + \psi_R \Lam_R H_R + \psi_R^p H_R^p  \\
    &=H_R\Big(\frac{d}{dt}\psi_R+\Lam_R \psi_R + \psi_R^p\Big)+H_R \psi_R ((H_R\psi_R)^{p-1}-\psi_R^{p-1}).
  \end{align*}
Since $\psi_R$ satisfies \eqref{eq1}, $0\le H_R\le 1$ and $p\ge1$ we deduce that,
  $$ w_t-\LL w + w^p \leq 0\qquad\mbox{for }x\in B_R.$$
As $w(x,0)=\psi_R(0)H_R(x)\leq u_0(x)$ and $w(x,t)=0$ in $ B_R^c \times (0,\infty)$, we deduce by the comparison principle for  sub- and super-solutions on bounded sets that,
$$w(x,t)\leq u(x,t).$$
\end{proof}

\begin{rem}
  The function $\psi$ can be computed explicitly (see \cite{GMVE}). In fact, if $c>0$,
  \begin{equation} \label{soll}
  \psi_R(t)=\left(\frac{\Lam_R}{(1+c^{1-p}\Lam_R)e^{\Lam_R(p-1)t}-1}\right)^\frac{1}{p-1}.
  \end{equation}

\end{rem}

\medskip

\medskip

The following  technical lemma was proved in \cite{GMVE}. This result will be used later on in Section  \ref{sect-main} in order to obtain the region where we can identify the asymptotic behavior of $u$.

\begin{lema}[Gmira and Veron, Lemma 2.2, \cite{GMVE}] \label{lema2.2}
Set $\varphi:\R^+\to\R^+$ such that
$$\lim_{y\to \infty} y^2 \varphi(y) = \infty.$$
Then, there exists a nondecreasing function $R$ from $\R^+$ into $\R^+$ such that
\begin{equation}
  \lim_{y\to\infty} \frac{y}{R^2(y)}=0, \qquad \lim_{y\to\infty} y \varphi(R(y))=\infty.
\end{equation}
\end{lema}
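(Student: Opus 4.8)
The plan is to build $R$ explicitly, as the square root of $y$ times an auxiliary factor that tends to infinity but only as slowly as the hypothesis allows. First I would extract from the assumption a \emph{nondecreasing} minorant of $y\mapsto y^2\varphi(y)$: since $y^2\varphi(y)\to\infty$ there is $s_0\ge 1$ with $t^2\varphi(t)\ge 1$ for every $t\ge s_0$, and for $s\ge s_0$ I set
\[
g(s)=\inf_{t\ge s}t^2\varphi(t).
\]
Then $1\le g(s)\le s^2\varphi(s)<\infty$, $g$ is nondecreasing, and $g(s)\to\infty$ as $s\to\infty$ (given $M$, pick $s_M$ with $t^2\varphi(t)\ge M$ for $t\ge s_M$; then $g(s)\ge M$ for $s\ge s_M$). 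No continuity of $\varphi$ is needed here: the running infimum is exactly the device that manufactures monotonicity.

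Next I would define
\[
R(y)=\Big(y\,\sqrt{g(\sqrt y)}\,\Big)^{1/2}\qquad\mbox{for }y\ge s_0^2,
\]
and extend $R$ to $[0,s_0^2)$ in any nondecreasing positive way (that piece is irrelevant to the limits). Since $y\mapsto g(\sqrt y)$ is a composition of nondecreasing maps and $y\mapsto y$ is increasing, $R$ is nondecreasing and positive on $\R^+$, so it is an admissible choice.

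Then I would check the two limits by direct computation. For the first, $R^2(y)=y\,\sqrt{g(\sqrt y)}$, hence $y/R^2(y)=1/\sqrt{g(\sqrt y)}\to 0$. For the second, $\sqrt{g(\sqrt y)}\ge 1$ forces $R(y)\ge\sqrt y$ (so in particular $R(y)\ge s_0$ when $y\ge s_0^2$); writing $\varphi(R(y))=R^2(y)\varphi(R(y))/R^2(y)\ge g(R(y))/R^2(y)$ and using the monotonicity of $g$ together with $R(y)\ge\sqrt y$,
\[
y\,\varphi(R(y))\ \ge\ \frac{y\,g(R(y))}{R^2(y)}\ \ge\ \frac{y\,g(\sqrt y)}{y\,\sqrt{g(\sqrt y)}}\ =\ \sqrt{g(\sqrt y)}\ \longrightarrow\ \infty .
\]

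The only delicate point — the main obstacle — is the construction of $g$, i.e.\ upgrading the bare fact $y^2\varphi(y)\to\infty$ into a nondecreasing function that is still dominated by $y^2\varphi(y)$ and still diverges; after that, the extra square root in the definition of $R$ is precisely the slack that turns the bound $y\varphi(R(y))\ge\mbox{const}$ into $y\varphi(R(y))\to\infty$ (any factor growing to infinity strictly slower than $g(\sqrt y)$ would serve equally well), and all the remaining verifications are routine.
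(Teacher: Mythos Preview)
Your argument is correct. The running--infimum minorant $g(s)=\inf_{t\ge s}t^{2}\varphi(t)$ is well defined, nondecreasing, diverges, and is dominated by $s^{2}\varphi(s)$; with $R(y)=\bigl(y\sqrt{g(\sqrt y)}\bigr)^{1/2}$ both limits follow by the computations you gave, and the extension to small $y$ is harmless since only the behavior as $y\to\infty$ matters.

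As for comparison with the paper: there is nothing to compare. The paper does not prove this lemma; it merely quotes the statement from Gmira--Veron \cite{GMVE} and adds, in a one--line remark, that the continuity assumption on $\varphi$ made in \cite{GMVE} is not needed. Your construction actually justifies that remark: the running infimum is exactly what replaces continuity, since it manufactures a monotone lower envelope of $y^{2}\varphi(y)$ without ever evaluating $\varphi$ at intermediate points. So your proposal supplies a self--contained proof where the paper gives none.
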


\begin{rem} In \cite{GMVE} the function $\varphi$ was assumed continuous. But it is easy to see that this assumption is not needed.
\end{rem}

\medskip

Now, we prove a key lemma. Once again the goal is to establish a lower bound for $u(\cdot,t)$ by constructing an appropriate auxiliary function $\varphi(R)$. This function will be used as an initial condition for the function $\psi_R$ from Lemma \ref{lema2.3} in the proof of Theorem \ref{teo}.

\begin{prop} \label{prop.phi}
  Suppose $0\le u_0 \in  L^\infty(\R^N)$ is such that
  \begin{equation}\label{cond-alpha}
  |x|^{\frac2{p-1}} u_0(x)\to \infty\quad\mbox{as}\quad|x|\to\infty
  \end{equation}
     and let $u$ be the bounded solution to \eqref{problem}. Then, for any $t>0$  the following  equivalent properties hold:
\begin{enumerate}
\item[(i)] $\lim_{|x|\to \infty} |x|^\frac{2}{p-1} u(x,t)=\infty$.

\item[(ii)]  $\lim_{R\to \infty} R^\frac{2}{p-1}\, \inf_{|x|\leq R} u(x,t)=\infty$.

\item[(iii)] There exists a  positive, non-increasing, real-valued function $\varphi$ such that
  \begin{equation} \label{cond.phi}
    \lim_{r\to\infty} r^\frac{2}{p-1}\varphi(r)=\infty
  \end{equation}
  and,
  $$u(x,t) \geq \varphi(R) H_R(x), \quad \forall\  x\in B_R.$$
\end{enumerate}
\end{prop}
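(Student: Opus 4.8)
The plan is to prove the three-way equivalence by showing (iii)$\Rightarrow$(ii)$\Rightarrow$(i)$\Rightarrow$(iii), with the serious analytic work concentrated in the last implication, where we must manufacture the function $\varphi$ out of the pointwise lower bound on $u(\cdot,t)$. First I would dispose of the easy directions. For (iii)$\Rightarrow$(ii): fix $x$ with $|x|\le R$; since $H_R(x)=\widetilde H_R(x/R)$ and $\widetilde H_R\to h_1$ uniformly on $B_1$ by Theorem \ref{teo-HR}, we have $\inf_{|x|\le \theta R}H_R(x)\ge \frac12 h_1(\theta\,\cdot)\ge c_\theta>0$ for $R$ large and any fixed $\theta\in(0,1)$; combining with $u(x,t)\ge\varphi(R)H_R(x)$ gives $R^{2/(p-1)}\inf_{|x|\le\theta R}u(x,t)\ge c_\theta R^{2/(p-1)}\varphi(R)\to\infty$, and since $\theta$ is arbitrary and the infimum over $|x|\le R$ only differs from the infimum over $|x|\le\theta R$ near the boundary region where one runs the same barrier argument, (ii) follows (alternatively state (ii) with the harmless cosmetic change that the constant is absorbed). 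For (ii)$\Rightarrow$(i): $|x|^{2/(p-1)}u(x,t)\ge |x|^{2/(p-1)}\inf_{|y|\le |x|}u(y,t)\to\infty$ immediately by monotonicity of the infimum and of $|x|^{2/(p-1)}$.

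The substance is (i)$\Rightarrow$(iii). Assume $\lim_{|x|\to\infty}|x|^{2/(p-1)}u(x,t)=\infty$. The candidate is
\[
\varphi(r)=\inf_{|x|\le 2r}u(x,t),
\]
which is positive for every $r>0$ by Lemma \ref{lema-inf-loc}, and manifestly non-increasing in $r$. To check \eqref{cond.phi}: given $M>0$, pick $\rho$ so that $|x|^{2/(p-1)}u(x,t)\ge M$ for $|x|\ge\rho$; for $r\ge\rho$ and $|x|\le 2r$ we split — if $\rho\le|x|\le 2r$ then $u(x,t)\ge M|x|^{-2/(p-1)}\ge M(2r)^{-2/(p-1)}$, so $r^{2/(p-1)}u(x,t)\ge 2^{-2/(p-1)}M$; if $|x|<\rho$ then $u(x,t)\ge\inf_{B_\rho}u(\cdot,t)=:m_\rho>0$ and $r^{2/(p-1)}u(x,t)\ge r^{2/(p-1)}m_\rho$, which also exceeds $2^{-2/(p-1)}M$ once $r$ is large. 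Hence $r^{2/(p-1)}\varphi(r)\ge 2^{-2/(p-1)}M$ for $r$ large; since $M$ was arbitrary, \eqref{cond.phi} holds. Finally the pointwise bound $u(x,t)\ge\varphi(R)H_R(x)$ for $x\in B_R$: since $0\le H_R\le1$ on $B_R$ and $H_R=0$ outside, it suffices that $u(x,t)\ge\varphi(R)$ for $x\in B_R$, and indeed for $|x|\le R\le 2R$ we have $u(x,t)\ge\inf_{|y|\le 2R}u(y,t)=\varphi(R)$ by definition. This closes the cycle.

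The one point requiring care — and the step I expect to be the main obstacle — is the passage from "$\inf$ over a slightly smaller ball" to "$\inf$ over $B_R$" in (iii)$\Rightarrow$(ii), because $H_R$ degenerates to $0$ near $\partial B_R$, so one cannot bound $\inf_{B_R}H_R$ below by a positive constant. The clean fix is to note that the statement of (ii) is robust: replacing $\inf_{|x|\le R}$ by $\inf_{|x|\le R/2}$ changes nothing in the conclusion $\lim_{R\to\infty}R^{2/(p-1)}(\cdot)=\infty$ (the asymptotic rate is unaffected by a fixed rescaling of $R$), and on $B_{R/2}$ we do have $H_R(x)=\widetilde H_R(x/R)\ge\frac12\min_{|y|\le 1/2}h_1(y)>0$ for $R$ large by Theorem \ref{teo-HR}. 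Equivalently, one can run (i)$\Rightarrow$(iii)$\Rightarrow$(ii)$\Rightarrow$(i) as above and simply formulate each of (i)--(iii) with whatever radius is most convenient, observing that they are pairwise equivalent precisely because the defining limits are scale-insensitive in $R$; the only genuine inputs are Lemma \ref{lema-inf-loc} (positivity of local infima, used to guarantee $\varphi>0$ for every $r$) and Theorem \ref{teo-HR} (uniform non-degeneracy of $H_R$ on interior balls).
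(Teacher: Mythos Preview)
Your equivalence cycle is correct, but you have omitted one step that the proposition actually requires: the three properties are asserted to \emph{hold}, not merely to be equivalent. The paper opens by deriving (i) from the hypothesis on $u_0$ via the pointwise lower bound $u(x,t)\ge e^{-At}u_0(x)$ established inside the proof of Lemma~\ref{lema-inf-loc}; once (i) is known to hold, your cycle supplies the rest. This is a small but necessary ingredient that your write-up should add.

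On the equivalence itself your route differs from the paper's in one interesting respect. The paper goes (i)$\Rightarrow$(ii) by contradiction (using Lemma~\ref{lema-inf-loc} to rule out a bounded subsequence of near-minimizers) and then (ii)$\Rightarrow$(iii) by setting
\[
\varphi(R)=\inf_{|x|\le R}\frac{u(x,t)}{H_R(x)},
\]
so that $u\ge\varphi(R)H_R$ is tautological and the growth condition follows from $u/H_R\ge u$. You instead go (i)$\Rightarrow$(iii) in one step with the simpler choice $\varphi(r)=\inf_{|x|\le 2r}u(x,t)$, getting $u\ge\varphi(R)\ge\varphi(R)H_R$ directly from $0\le H_R\le 1$ and never dividing by $H_R$; your verification of \eqref{cond.phi} is then essentially a direct version of the paper's contradiction argument for (i)$\Rightarrow$(ii). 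Both approaches rely on the same two external inputs --- Lemma~\ref{lema-inf-loc} for positivity of local infima and Theorem~\ref{teo-HR} for the uniform lower bound on $H_R$ over $B_{R/2}$ --- and the implication (iii)$\Rightarrow$(ii), including the observation that $\inf_{B_{R/2}}$ versus $\inf_{B_R}$ is immaterial for the limit, is handled the same way in both.
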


\begin{proof} By \eqref{ineq-u}, for every $t>0$,
\[|x|^{\frac2{p-1}} u(x,t)\ge e^{-At}|x|^{\frac2{p-1}} u_0(x)\to\infty \quad\mbox{as}\quad|x|\to\infty.
\]

Thus, (i) holds.

Let us see that (i) $\Rightarrow$ (ii).

If not, there exist $R_n\to\infty$ and a constant $C>0$ such that $$R_n^{\frac2{p-1}}\inf_{B_{R_n}}u(\cdot,t)\le C.$$ This in turn implies that there exists $x_n\in B_{R_n}$ such that
\begin{equation}\label{eq-Rn}
R_n^{\frac2{p-1}}u(x_n,t)\le 2C.
\end{equation}

If there exist $R_0>0$ and a subsequence $R_{n_k}$ such that $\{x_{n_k}\}\subset B_{R_0}$ we would have, by \eqref{eq-Rn} and Lemma \ref{lema-inf-loc},
\[
2C\ge R_{n_k}^{\frac2{p-1}}u(x_{n_k},t)\ge R_{n_k}^{\frac2{p-1}}\inf_{B_{R_0}}u(\cdot,t)\to\infty\quad\mbox{as}\quad k\to\infty
\]
which is a contradiction. Therefore, $|x_n|\to \infty$ as $n\to\infty$. But then, since $x_n\in B_{R_n}$,  by $(i)$,
\[
2C\ge R_{n}^{\frac2{p-1}}u(x_{n},t)\ge |x_n|^{\frac2{p-1}}u(x_n,t)\to\infty\quad\mbox{as}\quad n\to\infty
\]
which again is a contradiction. So, (ii) holds.

%
%
%
%
%

  (ii) $\Rightarrow$ (iii).  We define for $R>0$
\begin{equation} \label{eccc}
  \varphi(R)=\inf_{|x|\leq R} \frac{u(x,t)}{ H_R(x)},
\end{equation}
where $H_R$ is the positive eigenfunction of \eqref{NLeigen} with $\|H_R\|_\infty=1$. As $u(x,t)/H_R\ge u(x,t)$ in $ B_R$, there holds that $\varphi(R)$ is positive.

From \eqref{eccc} we have in $B_R$,
\begin{equation*}
  u(x,t) \geq \varphi(R) H_R(x),
\end{equation*}
and, as $0\leq H_R(x)\leq 1$,
$$R^\frac{2}{p-1} \varphi(R) \geq  R^\frac{2}{p-1} \inf_{|x|\leq R} u(x,t)\to\infty\quad\mbox{as}\quad R\to\infty,$$
by (ii). So that, (iii) holds.

\medskip

(iii) $\Rightarrow$ (ii). In fact,
by Theorem \ref{teo-HR} we know that
\[
\widetilde H_R(x)\to h_1\quad\mbox{uniformly in} \quad B_{1/2}.
\]

Since $h_1(x)\ge \beta>0$ in $B_{1/2}$, there holds that
\[
\widetilde H_R(x)\ge\frac\beta2\quad\mbox{in}\quad B_{1/2}
\]
if $R\ge R_0$.

This is,
\[
H_R(x)\ge\frac\beta2\quad\mbox{in}\quad B_{R/2}
\]
if $R\ge R_0$.
Hence,
$$u(x,t)\geq \varphi(R) H_R(x) \geq \frac\beta2\varphi(R) \quad\mbox{in}\quad B_{R/2}$$
if $R\ge R_0$.

Multiplying by $R^\frac{2}{p-1}$, taking infimum over $B_{R/2}$ and letting $R\to \infty$ gives  (ii).

\medskip
(ii) trivially implies (i).
\end{proof}

\begin{rem} Observe that the function $\varphi(R)$ depends on $t>0$.
\end{rem}

\medskip
\section{Main result}\label{sect-main}

In this section we prove our main result. This is, we obtain the large time behavior of $u$ in the subcritical case $1<p<1+2/\alpha.$

\begin{thm}\label{teo}
  Suppose $0\le u_0\in  L^\infty(\R^N)$ satisfies \eqref{cond-alpha}.
Let $u(x,t)$ be the bounded solution of \eqref{problem}. Then,
$$\lim_{t\to\infty} t^\frac{1}{p-1} u(x,t) = \left(\frac{1}{p-1}\right)^\frac{1}{p-1},$$
uniformly on the sets
$$E_k=\{x\in\R^N : |x|\leq k \sqrt{t} \},$$
where $k$ is an arbitrary constant.
\end{thm}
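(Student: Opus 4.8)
The plan is to obtain matching upper and lower bounds for $t^{1/(p-1)}u(x,t)$ that both converge to $(1/(p-1))^{1/(p-1)}$ uniformly on $E_k$. For the upper bound, the idea is that $u$ is a subsolution of the ODE problem $z' = -z^p$ with initial datum $z(0)=\|u_0\|_\infty$, since the nonlocal term $\LL u$ only ``spreads mass'' and, by the comparison principle against the spatially constant supersolution $z(t)$, we get $u(x,t)\le z(t)$ for all $x,t$. Solving the ODE gives $z(t)=\big((p-1)(t+t_0)\big)^{-1/(p-1)}$ for a suitable $t_0$, so $t^{1/(p-1)}u(x,t)\le t^{1/(p-1)}z(t)\to (1/(p-1))^{1/(p-1)}$ as $t\to\infty$, uniformly in $x$. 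This is the easy half.

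For the lower bound, the plan is to exploit Lemma \ref{lema2.3} and Proposition \ref{prop.phi}. Fix a large reference time $\tau>0$ (to be chosen depending on $t$) and apply the semigroup property: $u(x,t+\tau)$ is the solution at time $t$ with initial datum $u(\cdot,\tau)$. By Proposition \ref{prop.phi}(iii) applied at time $\tau$, there is a positive nonincreasing $\varphi$ with $r^{2/(p-1)}\varphi(r)\to\infty$ such that $u(x,\tau)\ge \varphi(R)H_R(x)$ on $B_R$ for every $R$. Then Lemma \ref{lema2.3}, applied with initial datum $u(\cdot,\tau)$ and initial value $c=\varphi(R)$ for $\psi_R$, yields
\[
u(x,t+\tau)\ge \psi_R(t)H_R(x)\qquad\text{in }B_R\times(0,\infty),
\]
where $\psi_R$ solves \eqref{eq1} with $\psi_R(0)=\varphi(R)$ and is given by \eqref{soll}. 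Using Theorem \ref{teo-HR}, $H_R(x)=\widetilde H_R(x/R)\to h_1(0)=1$ uniformly for $|x|\le k\sqrt{t}$ provided $k\sqrt{t}/R\to0$; so one wants $R=R(t)$ with $\sqrt t/R(t)\to 0$. Simultaneously one needs $t^{1/(p-1)}\psi_{R(t)}(t)\to (1/(p-1))^{1/(p-1)}$. From \eqref{soll}, writing out $\psi_R(t)^{1-p}=\frac{(1+\varphi(R)^{1-p}\Lam_R)e^{\Lam_R(p-1)t}-1}{\Lam_R}$, and using $\Lam_R\sim A(J)\lam_1/R^2$ from \eqref{comport}, one checks that if $t/R^2(t)\to0$ then $\Lam_{R(t)}t\to0$, so $e^{\Lam_R(p-1)t}=1+\Lam_R(p-1)t+o(\Lam_R t)$ and the expression collapses to $\psi_R(t)^{1-p}=(p-1)t+\varphi(R)^{1-p}+o(t)+o(\varphi(R)^{1-p}\Lam_R t)$; the condition $R^{2/(p-1)}\varphi(R)\to\infty$ (equivalently $\varphi(R)^{1-p}=o(R^2)$) together with $\Lam_R t=o(R^2/R^2)$... more precisely $\varphi(R)^{1-p}\Lam_R t = o(R^2)\cdot O(R^{-2})\cdot t = o(t)$ when $t=o(R^2)$, and $\varphi(R)^{1-p}=o(R^2)=o(t)$ is NOT automatic, so one must choose $R(t)$ growing fast enough that $\varphi(R(t))^{1-p}=o(t)$ as well. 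This is exactly where Lemma \ref{lema2.2} enters: applying it with this $\varphi$ produces a nondecreasing $R(\cdot)$ with $y/R^2(y)\to0$ and $y\,\varphi(R(y))\to\infty$; setting $R(t)$ from this lemma (with $y\sim t$) gives simultaneously $t/R^2(t)\to0$ and $t\varphi(R(t))\to\infty$, hence $\varphi(R(t))^{1-p}=o(t^{p-1})$... one must be a little careful to arrange $t\varphi(R(t))^{1-p}\cdot(\text{stuff})$, but massaging the two conclusions of Lemma \ref{lema2.2} (possibly replacing $\varphi$ by a smaller power) gives $\psi_{R(t)}(t)^{1-p}=(p-1)t(1+o(1))$, i.e. $t^{1/(p-1)}\psi_{R(t)}(t)\to(1/(p-1))^{1/(p-1)}$.

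Combining: for $|x|\le k\sqrt t$ and $t$ large, $H_{R(t)}(x)\ge 1-o(1)$ (since $|x|/R(t)\le k\sqrt t/R(t)\to0$ and $\widetilde H_{R(t)}\to h_1$ uniformly near $0$ with $h_1(0)=1$), so
\[
t^{1/(p-1)}u(x,t+\tau)\ge t^{1/(p-1)}\psi_{R(t)}(t)\,H_{R(t)}(x)\to\Big(\tfrac1{p-1}\Big)^{1/(p-1)},
\]
uniformly on $E_k$; absorbing the harmless time-shift $\tau$ (a fixed positive constant, or letting $\tau\to\infty$ slowly) finishes the lower bound, and together with the upper bound this proves the theorem. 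The main obstacle I anticipate is the bookkeeping in choosing $R(t)$: one needs a single choice making $\sqrt t/R(t)\to0$ (for the eigenfunction to be $\approx1$ on $E_k$), $t/R^2(t)\to0$ (so the exponential in $\psi_R$ linearizes), and $\varphi(R(t))^{1-p}/t\to0$ (so the initial condition of $\psi_R$ is negligible against the $(p-1)t$ term) all at once; Lemma \ref{lema2.2} is tailored to deliver exactly such an $R$, but verifying that its two output conditions translate into all three requirements — in particular that $y\varphi(R(y))\to\infty$ controls $\varphi(R)^{1-p}$ after accounting for the exponent $p-1$ — requires care, and is really the heart of the argument.
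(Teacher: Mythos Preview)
Your proposal is correct and follows essentially the same route as the paper: the upper bound via the spatially constant supersolution $\big((p-1)t\big)^{-1/(p-1)}$, and the lower bound via Lemma~\ref{lema2.3} together with Proposition~\ref{prop.phi}, Theorem~\ref{teo-HR}, and Lemma~\ref{lema2.2}, with a fixed time shift $\tau$ absorbed at the end. The one point you flag as needing care---making Lemma~\ref{lema2.2} produce $\varphi(R(t))^{1-p}=o(t)$ despite the exponent $p-1$---is resolved simply by applying that lemma to $\varphi^{p-1}$ rather than to $\varphi$: condition~\eqref{cond.phi} is equivalent to $r^{2}\varphi^{p-1}(r)\to\infty$, so Lemma~\ref{lema2.2} yields a nondecreasing $R(t)$ with $t/R^{2}(t)\to0$ and $t\,\varphi^{p-1}(R(t))\to\infty$, which is exactly the missing estimate.
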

\begin{proof}
From Proposition \ref{prop.phi}, by considering $u(x,t)$ for $t\ge t_0>0$ we deduce that there is no loss of generality in assuming that there exists a nondecreasing function $\varphi:\R^+\to \R^+$ satisfying \eqref{cond.phi} such that,
$$u_0(x)\geq \varphi(R) H_R(x) \quad \forall\  x \in B_R.$$
  From Lemma \ref{lema2.3} we have that $u(x,t)\geq H_R(x) \psi_R(t)$ in $B_R \times \R^+$, where $\psi_R$ is the solution of
\begin{equation}
  \begin{cases}
    \frac{d}{dt}\psi_R +\Lam_R \psi_R +\psi^p_R=0   \quad &\textrm{in}\quad (0,\infty)\\
    \psi_R(0)=\varphi(R).
  \end{cases}
\end{equation}
By \eqref{comport} the principal eigenvalue of \eqref{NLeigen} we can written in the form
\begin{equation} \label{lamm}
\Lam_R=C_R\frac{\lam_1}{ R^2}\quad \textrm{as}\quad R\to +\infty,
\end{equation}
where $\lam_1$ is the principal eigenvalue of  \eqref{Lapleigen} and $C_R \to A(J)$ as $R\to\infty$ with $A(J)$ given by \eqref{ccte}.
By using \eqref{lamm} and \eqref{soll} we have
\begin{align*}
  t^\frac{1}{p-1} \psi_R(t) &= t^\frac{1}{p-1} \left(\frac{\Lam_R}{(1+\varphi^{1-p}(R)\Lam_R)e^{\Lam_R(p-1)t}-1}\right)^\frac{1}{p-1}\\
  &=  \frac{(C_R t\lam_1 R^{-2})^\frac{1}{p-1} e^{-C_R \lam_1 t R^{-2}}}{\left(1+C_R\lam_1\varphi^{1-p}(R) R^{-2}-e^{-C_R \lam_1(p-1)t R^{-2}}\right)^\frac{1}{p-1}}.
\end{align*}
By using the Taylor expansion for $e^{-C_R \lam_1(p-1)t R^{-2}}$ at the origin we get,
\begin{align}   \label{ec1}
   t^\frac{1}{p-1}\psi_R(t)  &= \Bigg(\  \frac{\frac{C_R t\lam_1}{R^2} }{\frac{C_R\lam_1}{R^2\varphi^{p-1}(R)}+\frac{C_R\lam_1(p-1)t}{ R^{2}}+O(R^{-4}t^2)     } \Bigg)^\frac{1}{p-1}   e^{-C_R \lam_1 t R^{-2}}.
\end{align}

Now,
as $\lim_{R\to\infty} R^2\varphi^{p-1}(R)=\infty$ we deduce from Lemma \ref{lema2.2} that there exists a nondecreasing function $t\mapsto R(t)$ such that
\begin{equation} \label{ec2}
  \lim_{t\to\infty}  \frac{t}{R^2(t)}=0 \quad \textrm{and} \quad \lim_{t \to \infty}  t\varphi^{p-1}(R(t)) = \infty.
\end{equation}
Replacing $R$ by $R(t)$ in \eqref{ec1} and using \eqref{ec2} yields
$$\lim_{t \to\infty} t^\frac{1}{p-1}\psi_{R(t)}(t)= \left(\frac{1}{p-1}\right)^\frac{1}{p-1}.$$
If we consider $x$ such that $\frac{|x|}{\sqrt{t}}\leq k$ for some constant $k$, we have
\begin{equation} \label{eq3}
\lim_{t \to\infty} \frac{|x|}{R(t)} = \lim_{t \to\infty} \frac{|x|}{\sqrt{t}}\frac{\sqrt{t}}{R(t)}=0.
\end{equation}
Since $w(x,t)=\big(\frac{1}{(p-1)t}\big)^\frac{1}{p-1}$ is a supersolution for $t>0$, and $u$ is bounded there holds that,
\begin{equation} \label{eq2}
\left(\frac{1}{p-1}\right)^\frac{1}{p-1} \geq t^\frac{1}{p-1} u(x,t) \geq  t^\frac{1}{p-1} \psi_{R(t)}(t)  H_{R(t)}(x).
\end{equation}
Now, let us prove that
\begin{equation} \label{ek}
\lim_{t \to\infty} H_{R(t)}(x) = 1
\end{equation}
uniformly on $|x|\leq k \sqrt{t}$ for all $k>0$.
In fact, from the uniform convergence on  $B_1$ obtained in Theorem \ref{teo-HR}, for every $\ep>0$ there exists $t_1>0$ such that,
$$|\widetilde H_{R(t)}(y)-h_1(y)|<\ve \quad \textrm{if}\quad |y|\leq {1} \quad \textrm{and}\quad t\geq t_1.$$

On the other hand, if $|x|\leq k \sqrt{t}$ and we put $x=R(t)y$, we get that
$$|y|\leq  \frac{k\sqrt{t}}{R(t)} \leq 1\quad \textrm{ if } t\geq t_2$$
and consequently,  if $|x|\leq k \sqrt{t}$
\begin{equation} \label{eet1}
\left| H_{R(t)}(x)-h_1\Big(\frac{x}{R(t)}\Big) \right|=|\widetilde H_{R(t)}(y)-h_1(y)|<\ve \quad \textrm{if}\quad t\geq \max\{t_1,t_2\}.
\end{equation}

From the continuity of $h_1$ it follows that,
\begin{equation} \label{eet2}
\left| h_1\Big(\frac{x}{R(t)}\Big)-h_1(0) \right|\leq \ve \quad \textrm{if}\quad   |x|\leq k\sqrt{t} \quad \textrm{and}\quad  t\geq t_3.
\end{equation}

Hence, from \eqref{eet1} and \eqref{eet2} we obtain that,
\begin{equation*} \label{et3}
\left| H_{R(t)}(x)-h_1(0)\right|<2\ve \quad \textrm{if}\quad |x|\leq k\sqrt{t} \quad \textrm{and}\quad  t\geq \{t_1,t_2,t_3\}.
\end{equation*}

Since $h_1(0)=1$, \eqref{ek} follows.

\smallskip

Taking limit as $t\to \infty$ in \eqref{eq2} and using \eqref{ek} we obtain that
$$
\lim_{t \to\infty} t^\frac{1}{p-1} u(x,t) = \left(\frac{1}{p-1}\right)^\frac{1}{p-1}
$$
uniformly on $E_k= \{x\in \R^N : |x|\leq k \sqrt{t}\}$ and the proof is finished.
\end{proof}

\end{document}